\DeclareMathAlphabet{\mathfrak}{U}{jkpmia}{m}{it}
\SetMathAlphabet{\mathfrak}{bold}{U}{jkpmia}{bx}{it}
\newcounter{descriptcount}
\newlist{enumdescript}{description}{1}
\setlist[enumdescript,1]{%
  before={\setcounter{descriptcount}{0}%
          \renewcommand*\thedescriptcount{\arabic{descriptcount}}},
        font={\bfseries\stepcounter{descriptcount}\thedescriptcount.~}
}
\crefname{paragraph}{paragraph}{paragraphs}
\Crefname{Paragraph}{Paragraph}{Paragraphs}
\crefname{subsubsubappendix}{subsubsection}{subsubsections}
\Crefname{subsubsubappendix}{Subsubsection}{Subsubsections}
\crefname{subsubsubsubappendix}{paragraph}{paragraphs}
\Crefname{subsubsubsubappendix}{Paragraph}{Paragraphs}
\theoremstyle{definition}
\newtheorem{definition}[subsubsection]{\definitionautorefname}
\newcommand{\definitionautorefname}{Definition}
\newtheorem{construct}[subsubsection]{\constructautorefname}
\newcommand{\constructautorefname}{Construction}
\newcommand{\conjautorefname}{Conjecture}
\theoremstyle{remark}
\newtheorem{remark}[subsubsection]{\remarkautorefname}
\newcommand{\remarkautorefname}{Remark}
\newtheorem{exmp}[subsubsection]{\exmpautorefname}
\newcommand{\exmpautorefname}{Example}
\newtheorem{warning}[subsubsection]{\warningautorefname}
\newcommand{\warningautorefname}{Warning}
\newcommand{\notationautorefname}{Notation}
\theoremstyle{plain} \newtheorem{thm}[subsubsection]{\thmautorefname}
\newcommand{\thmautorefname}{Theorem}
\newtheorem{corlr}[subsubsection]{\corlrautorefname}
\newcommand{\corlrautorefname}{Corollary}
\newcommand{\prorautorefname}{Property}
\newtheorem{pros}[subsubsection]{\prosautorefname}
\newcommand{\prosautorefname}{Proposition}
\newtheorem{lemma}[subsubsection]{\lemmaautorefname}
\newcommand{\lemmaautorefname}{Lemma}
\newcommand{\scholiumautorefname}{Scholium}
\DeclareRobustCommand\widecheck[1]{{\mathpalette\@widecheck{#1}}}
\def\@widecheck#1#2{%
  \setbox\z@\hbox{\m@th$#1#2$}%
  \setbox\tw@\hbox{\m@th$#1%
    \widehat{%
      \vrule\@width\z@\@height\ht\z@
      \vrule\@height\z@\@width\wd\z@}$}%
  \dp\tw@-\ht\z@
  \@tempdima\ht\z@ \advance\@tempdima2\ht\tw@ \divide\@tempdima\thr@@
  \setbox\tw@\hbox{%
    \raise\@tempdima\hbox{\scalebox{1}[-1]{\lower\@tempdima\box
        \tw@}}}%
  {\ooalign{\box\tw@ \cr \box\z@}}}
\DeclareRobustCommand{\bbDelta}{{\mathpalette\bb@Delta\relax}}
\newcommand{\bb@Delta}[2]{%
  \begingroup
  \sbox\z@{$\m@th#1\Delta$}%
  \dimendef\Dht=6 \dimendef\Dwd=8
  \setlength{\Dwd}{\wd\z@}%
  \setlength{\Dht}{\ht\z@}%
  \begin{picture}(\Dwd,\Dht)
  \put(0,0){$\m@th#1\Delta$}
  \put(.42\Dwd,.7\Dht){\line(10,-26){.25\Dht}}
  \end{picture}%
  \endgroup
}
\DeclareRobustCommand{\bbGamma}{{\mathpalette\bb@Gamma\relax}}
\newcommand{\bb@Gamma}[2]{%
  \begingroup
  \sbox\z@{$\m@th#1\Gamma$}%
  \dimendef\Dht=6 \dimendef\Dwd=8
  \setlength{\Dwd}{\wd\z@}%
  \setlength{\Dht}{\ht\z@}%
  \begin{picture}(\Dwd,\Dht)
  \put(0,0){$\m@th#1\Gamma$}
  \put(.47\Dwd,.025\Dht){\line(0,1){.95\Dht}}
  \end{picture}%
  \endgroup
}
\newcommand\mapstofonc{\mathrel{\ooalign{$\rightsquigarrow$\cr%
  \kern-.105ex\raise.325ex\hbox{\scalebox{1}[0.388]{$\mid$}}\cr}}}
\newcommand{\addchar}[2]{%
  \@tfor\letter:=#1\do{%
    \letter#2
  }%
}
\newcommand{\Eta}{H} %
\newcommand{\cat}[1]{\mathfrak{#1}}
\newcommand{\op}[1]{{#1}^{\mathrm{op}}}
\newcommand{\func}[1]{\mathcal{\addchar{#1}{\!}}\,}
\newcommand{\yo}{\func{y}}
\newcommand{\oprd}[1]{\mathscr{#1}}
\newcommand{\lmamalg}[1]{\mathbin{\mathop{\amalg}\limits_{#1}}}
\newcommand{\infgrpds}{\cat{\infty\textnormal{-}Grpd}}
\newcommand{\inflcats}[1]{\cat{\mathnormal{(\infty,#1)}\textnormal{-}Cat}}
\newcommand{\infcats}{\inflcats{1}}
\newcommand{\infinfcats}{\inflcats{\omega}}
\newcommand{\stromcat}{\cat{Str}\omega\textnormal{-}\cat{Cat}}
\newcommand{\infzcats}{\inflcats{\mathbb{Z}}}
\newcommand{\catsp}{\cat{CatSp}}
\newcommand{\funcs}[2]{\cat{Fun}\bigl(\cat{#1},\cat{#2}\bigr)}
\newcommand{\inrt}{\text{\textnormal{inrt}}}
\newcommand{\elem}{\text{\textnormal{el}}}
\newcommand{\flagged}{\text{\textnormal{fl}}}
\newcommand{\stabd}{\text{\textnormal{st}}}
\newcommand{\biptd}{\text{\textnormal{bpt}}}
\newcommand{\walkeq}{\cat{eq}}
\newcommand{\algs}[1]{{#1}\cat{\textnormal{-}Alg}}
\DeclareMathOperator{\obj}{obj}
\DeclareMathOperator*{\colim}{colim}
\DeclareMathOperator{\cells}{cell}
\DeclarePairedDelimiter{\characmap}{\ulcorner}{\urcorner}
\title{Categorical spectra as\\ pointed $(\infty,\mathbb{Z})$-categories} \author{David Kern}
\begin{document}

\maketitle{}

\begin{abstract}
  Lessard's $\mathbb{Z}$-categories are an analogue of
  $\omega$-categories possessing cells in all positive \emph{and
    negative} dimensions. Categorical spectra, developed by Stefanich,
  are an analogue of spectra obtained by replacing the suspension of
  pointed $\infty$-groupoids by that of pointed
  $(\infty,\omega)$-categories.

  We give an $\infty$-categorical definition of weak
  $\mathbb{Z}$-categories (alias $(\infty,\mathbb{Z})$-categories),
  and show categorical spectra to be equivalent to pointed
  $(\infty,\mathbb{Z})$-categories. In particular, we show that the
  stable cells of categorical spectra coincide with the natural cells
  of $(\infty,\mathbb{Z})$-categories, and recover Lessard's
  description of spectra as pointed weak $\mathbb{Z}$-groupoids.
\end{abstract}

\tableofcontents{}

\section{Introduction}
\label{sec:introduction}

Several important constructions of higher categories exhibit certain
stability properties with respect to the shifting of categorical
level. For example:
\begin{itemize}
\item For $\cat{C}$ a $(\infty,1)$-category with pullbacks,
  \cite{haugseng18:_iterat} constructs an $(\infty,n+1)$-category
  $\cat{Span}_{n}(\cat{C})$ of $n$-iterated
  spans. 
  Each $(\infty,n)$-category $\cat{Span}_{n}(\cat{C})$ is canonically
  pointed at the terminal object $\ast$ of $\cat{C}$, it is shown
  in~\cite[Proposition 8.3]{haugseng18:_iterat} that for any
  $n\in\mathbb{N}$ we have
  $\Omega_{\ast}\cat{Span}_{n+1}(\cat{C})
  \coloneqq\hom_{\cat{Span}_{n+1}(\cat{C})}(\ast,\ast)
  \simeq\cat{Span}_{n}(\cat{C})$.
\item The Morita $(\infty,n+1)$-category $\cat{Mor}_{n}(\cat{V})$ of a
  symmetric monoidal $(\infty,1)$-category $\cat{V}$, constructed
  in~\cite{haugseng17:_morit}, with as objects the
  $\oprd{E}_{n}$-algebras in $\cat{V}$, $1$-morphisms the
  $\oprd{E}_{n-1}$-algebras in $\oprd{E}_{n}$-bimodules, and so
  on and so forth, is canonically pointed at the monoidal unit $I$
  with its trivial algebra structure. By~\cite[Corollary
  5.52]{haugseng17:_morit}, for any $n\in\mathbb{N}$ we have
  $\Omega_{(I,\cdot_{\text{triv}})}\cat{Mor}_{n+1}(\cat{V})
  \simeq\cat{Mor}_{n}(\cat{V})$.
\item For $\cat{M}$ a symmetric monoidal presentable stable
  $(\infty,1)$-category, \cite[Notation 5.2.1]{stefanich20:_presen}
  inductively defines a stable $(\infty,n+1)$-category
  $\cat{Mod}^{n}(\cat{M})$ of $\cat{Mod}^{n-1}(\cat{M})$-modules in
  presentable stable $(\infty,n)$-categories. As observed
  in~\cite[Example 13.3.6]{stefanich21:_higher_quasic_sheav}, for any
  $n\in\mathbb{Z}$ we have
  $\Omega_{\cat{Mod}^{n}(\cat{M})}\cat{Mod}^{n+1}(\cat{M})
  \simeq\cat{Mod}^{n}(\cat{M})$.
\end{itemize}
These properties were formalised by~\cite[Remark
13.2.15]{stefanich21:_higher_quasic_sheav} as manifestations of the
fact that each of these construction assemble, letting
$\ell\in\mathbb{N}$ vary over all possible values, into a categorical
spectrum, a variant of spectra where the shifting of homotopical
degree for pointed $\infty$-groupoids is replaced by the shifting of
categorical degree for pointed $(\infty,\omega)$-categories.

More precisely, while for pointed $\infty$-groupoids we have an
adjunction
$\Sigma\colon\infgrpds_{\ast}\rightleftarrows\infgrpds_{\ast}\colon\Omega$
where $\Sigma$ sends $n$-truncated $\infty$-groupoids (aka
$n$-groupoids) to $(n+1)$-groupoids and $\Omega$ sends $n$-groupoids
to $(n-1)$-groupoids, so too for $(\infty,\omega)$-categories do we
have an adjunction
$\Sigma\colon\infinfcats_{\ast}\rightleftarrows\infinfcats_{\ast}\colon\Omega$
in which $\Sigma$ sends $(\infty,n)$-categories to
$(\infty,n+1)$-categories and $\Omega$ sends $(\infty,n)$-categories
to $(\infty,n-1)$-categories. We can then mimic the definition of the
$\infty$-category of spectra as the limit of the tower
$\cdots\xrightarrow{\Omega}\infgrpds_{\ast}\xrightarrow{\Omega}\infgrpds_{\ast}$
by replacing $\infgrpds$ by $\infinfcats$ to define the
$\infty$-category of categorical spectra.

In particular, whereas only grouplike $\oprd{E}_{\infty}$-monoids
can be delooped to (connective) spectra, any
$\oprd{E}_{\infty}$-monoidal $(\infty,\omega)$-category can be
delooped to a connective categorical spectrum, and this is in fact an
equivalence of categories. Thus, if the connective categorical spectra
have an incarnation as more purely algebraic objects, one may wonder
if the same is true for general categorical spectra.

Some intuition, or evidence, for this idea is the fact that
categorical spectra can be seen to be made up of $k$-cells, for
$k\in\mathbb{Z}$ any \emph{relative} integer, which behave in the same
way as the cells of a higher category (in that they have a source and
target, and compose along those).

This idea of cells of any dimension can also be found in Lessard's
$\mathbb{Z}$-categories, defined and studied
in~\cite{lessard19:_spect_local_finit_z_group}
and~\cite{lessard22:_categ_i} with the goal of relating them to
spectra. Their definition follows Kan's construction of spectra by
performing the stabilisation procedure on $\omega$-categories
\emph{before} passing to pointed objects.

Lessard's work is written is strict and model-categorical language,
which has the result of obscuring the border between homotopical and
algebraic arguments. In this note, we give an
``algebraic''\footnote{In the sense of homotopical algebra, not of the
  classical algebraic definitions of higher categories.}
$\infty$-categorical definition of weak $\mathbb{Z}$-categories,
henceforth referred to as $(\infty,\mathbb{Z})$-categories. We then
extend Lessard's equivalence between pointed groupoidal weak
$\mathbb{Z}$-categories and spectra by showing that pointed
$(\infty,\mathbb{Z})$-categories are equivalent to categorical spectra
(\cref{thm:main-thm}), and prove a delooping result by extending this
description to $\oprd{E}_{k}$-monoidal
$(\infty,\mathbb{Z})$-categories for any $k$. We finally show that the
stable cells of categorical spectra can be interpreted as actual cells
in the corresponding $(\infty,\mathbb{Z})$-category
(\cref{thm:cells}), from which we recover Lessard's characterisation
of spectra as pointed $(\infty,\mathbb{Z})$-groupoids.

\subsection*{Open problem: extension to (Gray) cubical suspension}

We mention a possible avenue of research opened by our results.

There are two operations on $(\infty,\omega)$-categories that can be
interpreted as a ``suspension''. The first is the globular suspension
considered in this article, and the second is the Gray cylinder, the
(lax) Gray tensor product with the $1$-globe, a more cubical
suspension. The cubes were used by~\cite{campion23:_gray} to
characterise the Gray tensor product of $(\infty,\omega)$-categories,
thanks to the density results of~\cite{campion22:_cubes}.

In~\cite{masuda24:_algeb_categ_spect}, a Gray tensor product for
categorical spectra is constructed. Given our results, one would
expect that this tensor product should correspond to the ``smash
product'' (on pointed objects) for a Gray tensor product on
$(\infty,\mathbb{Z})$-categories. It is conjectured in~\cite[Remark
2.3]{campion23:_gray} that the cubes might provide a dense
presentation for $(\infty,\omega)$-categories. If this were the case,
it would then be possible to reproduce the constructions of this note
replacing the suspension of globular sums by the Gray cylinder on
cubes, and leverage the methods of~\cite{campion23:_gray} to obtain
the desired tensor product.

\subsection*{Acknowledgements}

If the numerous references throughout the text do not make it clear, I
wish to emphasise how much this note relies on~\cite[Chapters
12--14]{lessard19:_spect_local_finit_z_group} and~\cite[Chapter
13]{stefanich21:_higher_quasic_sheav}, and indeed can be seen as
simply an attempt to bridge the two works.

The author was supported by the Göran Gustafsson Foundation for
Research in Natural Sciences and Medicine.

\section{A homotopy-algebraic definition of
  $(\infty,\mathbb{Z})$-categories}
\label{sec:an-algebr-defin}

\subsection{Recollections on weak $\omega$-categories and strict
  $\mathbb{Z}$-categories}
\label{sec:recoll-weak-omega}

\begin{itemize}
\item The category $\Theta$ is a full subcategory of $\stromcat$ on
  the strict $\omega$-categories which are free on globular sums. The
  functors which are free on morphisms of $\omega$-graphs between
  globular sums will be called \textbf{inert}; they are the right
  class of an orthogonal factorisation system whose left class is
  called \textbf{active}.
\item For any $n\in\mathbb{N}$, there is a distinguished object
  denoted $\cat{D}_{n}$, the $n$-disk (or $n$-globe, or walking
  $n$-cell). These objects are said to be \textbf{elementary}. The
  subcategory $\Theta^{\elem}\subset\Theta$ on the elementary objects
  is isomorphic to the (non-reflexive) globe category $\mathbb{G}$
  represented as the generating graph
  \begin{equation}
    \label{eq:glob-cat}
    \begin{tikzcd}
      \overline{0} \arrow[r,shift left,"i_{0}^{+}"] \arrow[r,shift
      right,"i_{0}^{-}"'] & \overline{1} \arrow[r,shift
      left,"i_{1}^{+}"] \arrow[r,shift right,"i_{1}^{-}"'] & \cdots
      \arrow[r,shift left,"i_{n-1}^{+}"] \arrow[r,shift
      right,"i_{n-1}^{-}"'] & \overline{n} \arrow[r,shift
      left,"i_{n}^{+}"] \arrow[r,shift right,"i_{n}^{-}"'] &
      \cdots\text{,}
    \end{tikzcd}
  \end{equation}
  with the relations
  $i_{n+1}^{+}i_{n}^{\varepsilon}=i_{n+1}^{-}i_{n}^{\varepsilon}$ for
  any $n\in\mathbb{N}$ and any $\varepsilon\in\{+,-\}$.
\item The category $\Theta$ is closed under the operation of
  \textbf{suspension} $\Xi\colon\stromcat\to\stromcat$, which takes a
  strict $\omega$-category $\cat{C}$ and returns the $\omega$-category
  $\Xi\cat{C}$ with two objects and hom $\omega$-category $\cat{C}$
  from the first to the second. The subcategory $\mathbb{G}$ is
  further closed under this endofunctor $\Xi$, with
  $\Xi\cat{D}_{n}=\cat{D}_{n+1}$.

  As noted for example in~\cite{campion23:_gray}, the functor $\Xi$
  factors through the category $\Theta_{\ast,\ast}$ of bipointed
  objects, that is objects under $\ast\amalg\ast$; we denote
  $\Xi_{\biptd}\colon\Theta\to\Theta_{\ast,\ast}$ this
  ``enhanced'' suspension.
\end{itemize}

The choice of the elementary objects and the factorisation system
$(\text{inert},\text{active})$ on $\op{\Theta}$ is known
from~\cite{chu21:_homot_segal} as a structure of \textbf{algebraic
  pattern}, and allows one to talk of Segal objects. Following the
terminology, for any $T\in\op{\Theta}$ we write
${\op{\Theta}}^{\elem}_{T/}$ for the category of \emph{inert}
morphisms to an elementary object in $\op{\Theta}$

\begin{definition}
  A \textbf{flagged $(\infty,\omega)$-category} is a presheaf
  $\func{X}\colon\op{\Theta}\to\infgrpds$ on $\Theta$ which satisfies
  the Segal condition:
  \begin{equation}
    \label{eq:segal-omega-cat}
    \forall
    T\in\op{\Theta},\func{X}(T)\xrightarrow{\simeq}
    \varprojlim_{E\in{\op{\Theta}}^{\elem}_{T/}}\func{X}(E)\text{.}
  \end{equation}
\end{definition}

\begin{remark}
  \label{remark:globular-pres-segcond}
  As shown in~\cite[Lemma
  3.3.3]{lessard19:_spect_local_finit_z_group}, any object
  $T\in\Theta$ admits a so-called \textbf{globular presentation}
  \begin{equation}
    \label{eq:glob-pres-state}
    T\simeq\cat{D}_{n_{1}}\lmamalg{\cat{D}_{m_{1}}}
    \dots\lmamalg{\cat{D}_{m_{p-1}}}\cat{D}_{n_{p}}\text{,}
  \end{equation}
  for a uniquely determined list of natural integers
  $n_{1},\cdots,n_{p},m_{1},\cdots,m_{p-1}\in\mathbb{N}$ with
  $m_{i}<n_{i},n_{i+1}$, as a gluing of globes along their
  boundaries. Furthermore, by~\cite[Lemme 2.3.22]{ara10:_sur_groth}
  this diagram provides an initial\footnote{It is only stated
    in~\cite{ara10:_sur_groth} that the inclusion is initial as a
    functor of $1$-categories, that is that its slices are (non-empty
    and) connective, but in fact the proof shows that these slices are
    zigzags hence contractible, whence initiality as a functor of
    $(\infty,1)$-categories.} subcategory of
  ${\op{\Theta}}^{\elem}_{T/}$, so that the Segal condition takes the
  more concrete form
  \begin{equation}
    \label{eq:segal-omega-cat-decomp}
    \func{X}(T)\xrightarrow{\simeq}
    \func{X}(\overline{n_{1}})\times_{\func{X}(\overline{m_{1}})}
    \cdots
    \times_{\func{X}(\overline{m_{p-1}})}\func{X}(\overline{m_{p}})\text{.}
  \end{equation}
\end{remark}

Another way of writing the general Segal conditions is that the
presheaf $\func{X}$ is local with respect to the morphisms
$\yo(T)\to\colim\yo(E)$. This formulation as local objects is useful
for defining univalence-completeness (also known as Rezk-completeness)
in a similar manner, to restrict from flagged
$(\infty,\omega)$-categories to actual $(\infty,\omega)$-categories.

The \textbf{walking equivalence}, as a presheaf on $\Theta$, is
defined as the gluing
\begin{equation}
  \label{eq:walkeq-def}
  \walkeq=\colim\left(\begin{tikzcd}[cramped]
      & {\yo[1]} & & {\yo[3]} & & {\yo[1]} & \\
      {\yo[0]} \arrow[from=ur] & & {\yo[2]} \arrow[from=ul]
      \arrow[from=ur] & & {\yo[2]}
      \arrow[from=ul] \arrow[from=ur] & & {\yo[0]} \arrow[from=ul]
    \end{tikzcd}\right)
\end{equation}
of two degenerate $2$-simplices along their non-degenerate edge,
expressing an arrow with a left and a right inverses.

\begin{definition}
  A Segal presheaf $\func{X}\colon\op{\Theta}\to\infgrpds$ is
  \textbf{univalent-complete} if it is furthermore local with respect
  to the arrows $\Xi^{n}(\walkeq\to\ast),n\geq0$.

  An \textbf{$(\infty,\omega)$-category} is a flagged
  $(\infty,\omega)$-category which is univalent-complete.

  We denote $\infinfcats^{\flagged}\supset\infinfcats$ the
  $(\infty,1)$ categories of flagged $(\infty,\omega)$-categories and
  of $(\infty,\omega)$-categories respectively.
\end{definition}

\begin{definition}
  \label{def:theta-stab}
  The \textbf{stable cell category} is the sequential colimit of
  $1$-categories
  \begin{equation}
    \label{eq:theta-stab-def}
    \Theta_{\mathbb{Z}}\coloneqq\colim\bigl(
    \Theta\xrightarrow{\Xi}\Theta\xrightarrow{\Xi}
    \Theta\xrightarrow{\Xi}\cdots\bigr)\text{.}
  \end{equation}
\end{definition}

\begin{construct}[Shifts and infinite suspensions]
  \label{construct:shifts-theta-stab}
  As outlined in~\cite{lessard19:_spect_local_finit_z_group}, we have
  shifting operators on the category $\Theta_{\mathbb{Z}}$.
  \begin{itemize}
  \item The inclusion of the $k$th factor, for $k\in\mathbb{N}$, in
    the tower~\cref{eq:theta-stab-def} gives a functor
    $\Xi^{\infty-k}\colon\Theta\to\Theta_{\mathbb{Z}}$. Furthermore,
    we have $\Xi^{\infty-k}\circ\Xi\simeq\Xi^{\infty-(k-1)}$, so that
    for $\ell<0$ we may also define
    $\Xi^{\infty-\ell}\coloneqq\Xi^{\infty}\circ\Xi^{-\ell}$, where
    $\Xi^{\infty}\coloneqq\Xi^{\infty-0}$.
  \item By~\cite[Remark 13.2.7]{stefanich21:_higher_quasic_sheav} the
    endofunctor $\Xi\colon\Theta\to\Theta$ induces an invertible
    endofunctor of $\Theta_{\mathbb{Z}}$, which we also denote
    $\Xi$. More explicitly, it comes from the morphism of
    ($\mathbb{N}$-indexed) diagrams
    \begin{equation}
      \label{eq:morph-diagra-shift}
      \begin{tikzcd}
        \Theta \arrow[r,"\Xi"] \arrow[d,"\Xi"'] & \Theta
        \arrow[r,"\Xi"] \arrow[d,"\Xi"] & \Theta
        \arrow[r,"\Xi"] \arrow[d,"\Xi"] & \cdots \\
        \Theta \arrow[r,"\Xi"'] & \Theta \arrow[r,"\Xi"'] & \Theta
        \arrow[r,"\Xi"'] & \cdots
      \end{tikzcd}
    \end{equation}
    both having the same colimit $\Theta_{\mathbb{Z}}$. We will also
    denote $\Eta=\Xi^{-1}$ the inverse of this shifting functor.

    By construction, these shifts are compatible with the infinite
    suspensions (and in particular, the suspensions of globular sums)
    in that we have
    $\Xi\circ\Xi^{\infty}\simeq\Xi^{\infty+1}\simeq\Xi^{\infty}\circ\Xi$
    and $\Eta\circ\Xi^{\infty}\simeq\Xi^{\infty-1}$.
  \end{itemize}
\end{construct}

\begin{construct}[The stable globe category]
  \label{construct:stable-globe-cat}
  Since the elementary subcategory $\mathbb{G}\subset\Theta$ is stable
  under $\Xi$, we can also consider
  \begin{equation}
    \label{eq:def-globe-stabd}
    \mathbb{G}_{\mathbb{Z}}\coloneqq\colim\bigl(
    \mathbb{G}\xrightarrow{\Xi}\mathbb{G}\xrightarrow{\Xi}
    \mathbb{G}\xrightarrow{\Xi}\cdots\bigr)\text{.}
  \end{equation}

  By~\cite[Proposition 12.3.2]{lessard19:_spect_local_finit_z_group},
  this category admits an explicit
  description\footnote{\cite{lessard19:_spect_local_finit_z_group}
    actually describes the reflexive stable globe category; however
    in~\cite{lessard22:_categ_i} he switches to the non-reflexive
    one.}, as equivalent to a category $\mathbb{G}_{\stabd}$ obtained
  from the graph
  \begin{equation}
    \label{eq:stabd-glob-cat}
    \begin{tikzcd}
      \cdots \arrow[r,shift left,"i_{-m-1}^{+}"] \arrow[r,shift
      right,"i_{-m-1}^{-}"'] & \overline{-m} \arrow[r,shift
      left,"i_{-m}^{+}"] \arrow[r,shift right,"i_{-m}^{-}"'] & \cdots
      \arrow[r,shift left,"i_{-2}^{+}"] \arrow[r,shift
      right,"i_{-2}^{-}"'] & \overline{-1} \arrow[r,shift
      left,"i_{-1}^{+}"] \arrow[r,shift right,"i_{-1}^{-}"'] &
      \overline{0} \arrow[r,shift left,"i_{0}^{+}"] \arrow[r,shift
      right,"i_{0}^{-}"'] & \overline{1} \arrow[r,shift
      left,"i_{1}^{+}"] \arrow[r,shift right,"i_{1}^{-}"'] & \cdots
      \arrow[r,shift left,"i_{n-1}^{+}"] \arrow[r,shift
      right,"i_{n-1}^{-}"'] & \overline{n} \arrow[r,shift
      left,"i_{n}^{+}"] \arrow[r,shift right,"i_{n}^{-}"'] & \cdots
    \end{tikzcd}
  \end{equation}
  by imposing the relations
  $i_{n+1}^{+}i_{n}^{\varepsilon}=i_{n+1}^{-}i_{n}^{\varepsilon}$ for
  any $n\in\mathbb{Z}$ and any $\varepsilon\in\{+,-\}$.

  By analogy with the unstable case, for any $\ell\in\mathbb{Z}$ we
  denote $\Xi^{\infty}\cat{D}_{\ell}$ the image of $\overline{\ell}$
  under the embedding
  $\mathbb{G}_{\mathbb{Z}}\hookrightarrow\Theta_{\mathbb{Z}}$ (even
  though the ones with $\ell<0$ are not in the image of the functor
  $\Xi^{\infty}$), so that we always have
  $\Xi^{\infty}\cat{D}_{\ell}=\Xi^{\infty+\ell}\cat{D}_{0}$.
\end{construct}

\begin{lemma}[{\cite[Lemma
    12.4.1]{lessard19:_spect_local_finit_z_group}}]
  \label{lemma:stabl-globular-pres}
  For any $T\in\Theta_{\mathbb{Z}}$, there is a uniquely determined
  list of relative integers
  $n_{1},\cdots,n_{p},m_{1},\cdots,m_{p-1}\in\mathbb{Z}$ with
  $m_{i}<n_{i},n_{i+1}$ such that
  \begin{equation}
    \label{eq:stab-glob-pres}
    T\simeq\Xi^{\infty}\cat{D}_{n_{1}}
    \lmamalg{\Xi^{\infty}\cat{D}_{m_{1}}}
    \dots\lmamalg{\Xi^{\infty}\cat{D}_{m_{p-1}}}
    \Xi^{\infty}\cat{D}_{n_{p}}\text{.}
  \end{equation}
\end{lemma}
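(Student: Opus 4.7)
The plan is to reduce the stable globular presentation to the unstable one of \cref{remark:globular-pres-segcond} (Lessard's 3.3.3), exploiting the description of $\Theta_{\mathbb{Z}}$ as the sequential colimit of the tower of suspensions.

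For existence, any object $T\in\Theta_{\mathbb{Z}}$ arises as $\Xi^{\infty-k}(T_{k})$ for some $k\in\mathbb{N}$ and some $T_{k}\in\Theta$, by the defining colimit. I apply the unstable globular presentation to $T_{k}$ to get
\[
  T_{k}\simeq
  \cat{D}_{a_{1}}\lmamalg{\cat{D}_{b_{1}}}
  \dots\lmamalg{\cat{D}_{b_{p-1}}}\cat{D}_{a_{p}}
\]
with uniquely determined $a_{i},b_{j}\in\mathbb{N}$ satisfying $b_{j}<a_{j},a_{j+1}$, and then transport this pushout along $\Xi^{\infty-k}$. The key compatibility is that each $\Xi\colon\Theta\to\Theta$ preserves pushouts of globular sums along boundary inclusions—directly checkable by inspection, since $\Xi$ sends $\cat{D}_{a}\lmamalg{\cat{D}_{b}}\cat{D}_{a'}$ to $\cat{D}_{a+1}\lmamalg{\cat{D}_{b+1}}\cat{D}_{a'+1}$—and consequently so does the inclusion $\Xi^{\infty-k}$ into the sequential colimit, since pushouts of globular sums in $\Theta_{\mathbb{Z}}$ can be computed by lifting to a common stage and using that each transition $\Xi$ respects the pushout. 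Combined with the identity $\Xi^{\infty-k}(\cat{D}_{m})\simeq\Xi^{\infty}\cat{D}_{m-k}$ (coming from $\Xi^{\infty-k}\circ\Xi^{k}\simeq\Xi^{\infty}$ in \cref{construct:shifts-theta-stab}), this produces the presentation~\cref{eq:stab-glob-pres} with $n_{i}\coloneqq a_{i}-k$ and $m_{j}\coloneqq b_{j}-k$ in $\mathbb{Z}$, satisfying $m_{j}<n_{j},n_{j+1}$.

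For uniqueness, suppose $T$ has two such presentations with parameter lists $(n_{i},m_{j})$ and $(n'_{i},m'_{j})$. Choose $K\in\mathbb{N}$ large enough that every shifted index $n_{i}+K,m_{j}+K,n'_{i}+K,m'_{j}+K$ is non-negative, and apply the auto-equivalence $\Xi^{K}$ of $\Theta_{\mathbb{Z}}$ from \cref{construct:shifts-theta-stab}, which by construction takes each $\Xi^{\infty}\cat{D}_{\ell}$ to $\Xi^{\infty}\cat{D}_{\ell+K}$ and preserves the globular pushouts. The two resulting presentations of $\Xi^{K}T$ now lie in the image of $\Xi^{\infty}\colon\Theta\to\Theta_{\mathbb{Z}}$, and correspond to two globular sums in $\Theta$ with parameters $(n_{i}+K,m_{j}+K)$ and $(n'_{i}+K,m'_{j}+K)$. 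Since $\Xi\colon\Theta\to\Theta$ is fully faithful on globular sums—another direct combinatorial observation, as $\Xi$ simply raises every cell dimension by one without merging anything—so is $\Xi^{\infty}$, being a sequential colimit of fully faithful functors. Thus the two globular sums in $\Theta$ are already isomorphic there, and the unstable uniqueness forces termwise equality, hence equality of the original parameters.

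The main obstacle is justifying the two compatibility statements invoked above: that the inclusion $\Xi^{\infty-k}\colon\Theta\to\Theta_{\mathbb{Z}}$ preserves globular pushouts (needed for existence) and that $\Xi^{\infty}$ is fully faithful on globular sums (needed for uniqueness). Both reduce to the single combinatorial fact that $\Xi\colon\Theta\to\Theta$ acts on globular sums by shifting all cell dimensions by one, preserving their pushout structure and reflecting isomorphisms; the passage to $\Theta_{\mathbb{Z}}$ is then a formal consequence of how hom-sets and pushouts behave in filtered colimits of $1$-categories.
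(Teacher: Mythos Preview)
The paper does not prove this lemma; it simply imports it from Lessard by citation, so there is no ``paper's own proof'' to compare against. Your reduction to the unstable globular presentation via the colimit description of $\Theta_{\mathbb{Z}}$ is the natural argument and is essentially what one expects Lessard's proof to be.

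There is, however, one genuine slip in your uniqueness argument: the functor $\Xi\colon\Theta\to\Theta$ is \emph{not} fully faithful. For instance $\hom_{\Theta}(\cat{D}_{0},\cat{D}_{0})$ is a singleton, while $\hom_{\Theta}(\Xi\cat{D}_{0},\Xi\cat{D}_{0})=\hom_{\Theta}(\cat{D}_{1},\cat{D}_{1})$ has three elements (the identity and the two constant maps). Consequently $\Xi^{\infty}$ is not fully faithful either, and your stated justification (``a sequential colimit of fully faithful functors'') fails. Fortunately your argument does not actually use fullness: what you need is only that $\Xi$ is faithful and reflects isomorphisms (equivalently, is injective on isomorphism classes), both of which do hold --- if $\Xi S\simeq\Xi S'$ then comparing hom-objects between the two distinguished points recovers $S\simeq S'$. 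With this correction, the argument goes through: an isomorphism $\Xi^{\infty}S\simeq\Xi^{\infty}S'$ in the filtered colimit is witnessed at some finite stage as $\Xi^{k}S\simeq\Xi^{k}S'$, and reflecting isomorphisms $k$ times gives $S\simeq S'$, whence unstable uniqueness finishes. You should replace the fully-faithful claim by this weaker (and true) one.
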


\begin{definition}[{\cite[Definition
    12.4.2]{lessard19:_spect_local_finit_z_group}}]
  A \textbf{strict $\mathbb{Z}$-category} is a functor
  $\func{X}\colon\op{\Theta_{\mathbb{Z}}}\to\cat{Set}$ taking stable
  globular presentations to limits, that is such that for any
  $T\in\Theta_{\mathbb{Z}}$ with globular presentation as
  in~\cref{eq:stab-glob-pres}, the function
  as~\cref{eq:segal-omega-cat-decomp} is an isomorphism.
\end{definition}

\begin{remark}
  It is shown in~\cite{lessard22:_categ_i} that, much as strict
  $\omega$-categories can be seen as globular sets (presheaves on
  $\mathbb{G}$) equipped with a structure of algebra over the ``free
  $\omega$-category'' monad, so too are strict $\mathbb{Z}$-categories
  also presheaves on $\mathbb{G}_{\stabd}$ equipped with a structure
  of algebra over the colimit monad. We will not pursue this point of
  view further in this note.
\end{remark}

\subsection{Flagged $(\infty,\mathbb{Z})$-categories}
\label{sec:flagged-z-categories}

Our first step for putting the algebraic definition of strict
$\mathbb{Z}$-categories on the same footing as that of
$(\infty,\omega)$-categories will be to endow the category
$\Theta_{\mathbb{Z}}$ with a structure of algebraic pattern.

\begin{lemma}
  \label{lemma:biptd-suspend-loop}
  The functor $\Xi_{\biptd}\colon\Theta\to\Theta_{\ast,\ast}$ admits a
  right-adjoint $\Eta_{\biptd}\colon\Theta_{\ast,\ast}\to\Theta$.
\end{lemma}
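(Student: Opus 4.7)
The plan is to construct the right adjoint $\Eta_{\biptd}$ explicitly by exploiting the globular presentation from \cref{remark:globular-pres-segcond}. Given a bipointed object $(T, a, b) \in \Theta_{\ast,\ast}$, I would write $T \simeq \cat{D}_{n_1}\lmamalg{\cat{D}_{m_1}}\cdots\lmamalg{\cat{D}_{m_{p-1}}}\cat{D}_{n_p}$ with its canonical chain of objects $x_0, \ldots, x_p$, and identify the marked points as $a = x_i$ and $b = x_j$.

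In the principal case $i < j$ with all intermediate gluing dimensions $m_k$ (for $i<k<j$) strictly positive, I would set
\begin{equation*}
\Eta_{\biptd}(T, x_i, x_j) \coloneqq \cat{D}_{n_{i+1}-1}\lmamalg{\cat{D}_{m_{i+1}-1}}\cdots\lmamalg{\cat{D}_{m_{j-1}-1}}\cat{D}_{n_j-1}\text{,}
\end{equation*}
which models the interior of the path from $a$ to $b$ in $T$, shifted down by one dimension, and lies in $\Theta$ by construction. The degenerate cases $i\geq j$, or $m_k = 0$ somewhere in the interior, would be handled by assigning the initial object of $\Theta$ after verifying that the corresponding hom-set in $\Theta_{\ast,\ast}$ is trivial in those cases.

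Functoriality and the adjunction would then follow from unwinding the universal property. The unit $S \to \Eta_{\biptd}\Xi_{\biptd}S$ is the identity, since the globular presentation of $\Xi S$ is obtained from that of $S$ by incrementing all dimensions by one, and its interior between the two canonical endpoints exactly recovers $S$. The counit $\Xi_{\biptd}\Eta_{\biptd}(T,a,b) \to (T,a,b)$ is the canonical embedding of the inner sub-structure back into $T$. Triangle identities reduce, via the Segal condition~\cref{eq:segal-omega-cat-decomp}, to a check on the elementary globes $\cat{D}_n$, where the adjunction expresses the standard identification $\hom_{\Theta}(\cat{D}_k, \cat{D}_{n-1}) \cong \{\phi\colon\cat{D}_{k+1} \to \cat{D}_n : \phi(s) = s, \phi(t) = t\}$ coming from the definition of the non-reflexive globe category $\mathbb{G}$.

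The main obstacle will be the detailed case analysis for degenerate bipointings, and ensuring functoriality with respect to all morphisms in $\Theta_{\ast,\ast}$; in particular, one must verify that the construction is invariant under the choice of globular presentation and well-behaved on all morphisms in $\Theta$, which requires leveraging the active/inert factorisation system and the explicit description of morphisms between globular sums as gluings of globe maps.
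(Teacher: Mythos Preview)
The paper dispatches this in two lines: $\Eta_{\biptd}$ sends a bipointed $(\cat{C};C_{0},C_{1})$ to the hom $\omega$-category $\cat{C}(C_{0},C_{1})$, and the adjunction is \cite[Lemma~1.8]{campion22:_cubes}. Your ``principal case'' formula is an explicit rendering of this same hom, so at heart the approaches agree; the difference is that the paper outsources the verification while you attempt a combinatorial case analysis.

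That case analysis, however, breaks down. First, a globular sum with presentation of length $p$ need not have $p+1$ objects: for instance $\cat{D}_{2}\amalg_{\cat{D}_{1}}\cat{D}_{3}$ has only two $0$-cells, since gluing along $\cat{D}_{1}$ identifies the object sets of the two globes. Your labelling of the marked points as $x_{i},x_{j}$ indexed by position in the globular presentation is therefore not well-posed.

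Second, and more seriously, the claim that the hom-set in $\Theta_{\ast,\ast}$ is ``trivial'' whenever some intermediate $m_{k}=0$ is false. For $T=\cat{D}_{2}\amalg_{\cat{D}_{0}}\cat{D}_{2}$ bipointed at its extremal $0$-cells, a bipointed map from $\Xi_{\biptd}\cat{D}_{0}=\cat{D}_{1}$ is a $1$-cell between those objects, of which there are four; in fact $T(x_{0},x_{2})\simeq\cat{D}_{1}\times\cat{D}_{1}$, which is not even an object of $\Theta$. (Similarly, when $a=b$ the correct value is the terminal $\cat{D}_{0}$, not an initial object --- and $\Theta$ as used here has no initial object.) No assignment of ``the initial object'' can represent the functor $S\mapsto\Theta_{\ast,\ast}(\Xi_{\biptd}S,(T,a,b))$ in these cases, and indeed no formula landing in $\Theta$ will do so in general, since the true hom may lie outside $\Theta$. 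This is exactly the difficulty the paper sidesteps by working abstractly and deferring to the cited reference.
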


\begin{proof}
  The right adjoint $\Eta_{\biptd}$ is constructed by sending a bipointed
  $\omega$-category $(\cat{C};C_{0},C_{1})$ to the hom
  $\omega$-category $\cat{C}(C_{0},C_{1})$. That it is indeed
  right-adjoint to $\Xi$ is~\cite[Lemma 1.8]{campion22:_cubes}.
\end{proof}

\begin{warning}
  \label{warning-desuspend-infcats-notcell}
  The functor $\Xi\colon\stromcat\to\stromcat$ (forgetting the
  bipointing of a suspension) also admits a right-adjoint
  $\Eta\colon\stromcat\to\stromcat$, which sends a strict
  $\omega$-category $\cat{C}$ to
  $\colim_{x,y\in\obj(\cat{C})}\hom(x,y)$. However, it does not
  restrict to an endofunctor of $\Theta$ since the desuspension of a
  globular sum need not be connected (which globular sums always are).
\end{warning}

\begin{pros}
  The functor $\op{\Xi}\colon\op{\Theta}\to\op{\Theta}$ defines a
  morphism of algebraic patterns.
\end{pros}

\begin{proof}
  That $\Xi$ preserves elementary objects is immediate since
  $\Xi\overline{n}=\overline{n+1}$. To see that it also preserves
  inert morphisms, it is enough to observe that the restriction of
  $\Xi$ to the inert subcategory of $\Theta$, which is the category of
  pasting shapes, coincides with the natural operation of suspension
  on pasting shapes (for example in their sylvan description,
  \emph{cf.}~\cref{construct:inrt-stable-trees}).
  
  For the preservation of active morphisms, we will use their
  characterisation as left-orthogonal (in $\Theta$, rather than
  $\op{\Theta}$) to the inerts. Let then $f\colon A\to B$ be an active
  morphism, and consider a lifting problem
  \begin{equation}
    \label{eq:lift-pb-shift-active}
    \begin{tikzcd}
      \Xi A \arrow[d,"\Xi f"'] \arrow[r] & C \arrow[d,tail,"e"] \\
      \Xi B \arrow[r] & D
    \end{tikzcd}
  \end{equation}
  with $e\colon C\to D$ inert.

  As $\Xi f$ is in the image of $\Xi$ it is in particular a bipointed
  map, fitting in the extended diagram as below-left
  \begin{equation}
    \label{eq:lift-pb-bipt-adj}
    \begin{tikzcd}[row sep=tiny]
      & \Xi A \arrow[dd,"\Xi f"'] \arrow[r] & C \arrow[dd,"e"] \\
      \ast\amalg\ast \arrow[ur] \arrow[dr] & & \\
      & \Xi B \arrow[r] & D
    \end{tikzcd}\qquad\qquad\qquad
    \begin{tikzcd}
      A \arrow[d,"f"'] \arrow[r] & \Eta_{\biptd} C \arrow[d,tail,"H e"] \\
      B \arrow[r] & \Eta_{\biptd} D
    \end{tikzcd}
  \end{equation}
  which also displays $C$ and $D$ as bipointed objects and $e$ as a
  bipointed map. Thus the square can be adjointed
  by~\cref{lemma:biptd-suspend-loop} to the diagram as above-right.

  But as $\Eta_{\biptd}$ is as easily seen as $\Xi$ to preserve inert maps, the
  latter square is a lifting problem with an active map on the left
  and an inert one on the right, so admits an essentially unique
  solution, which can be adjointed to an equally unique solution of
  the lifting problem~\cref{eq:lift-pb-shift-active}. Thus $\Xi f$ is
  active.
\end{proof}

Recall that by~\cite[Corollary 5.5]{chu21:_homot_segal}, the category
of algebraic patterns has limits and filtered colimits, which are preserved by the forgetful functor to
$\infty$-categories.

\begin{definition}
  The algebraic pattern $\op{\Theta}_{\mathbb{Z}}$ is the sequential
  colimit of the tower of algebraic patterns
  \begin{equation}
    \label{eq:tower-suspend-theta}
    \op{\Theta}\xrightarrow{\op{\Xi}}\op{\Theta}\xrightarrow{\op{\Xi}}
    \op{\Theta}\xrightarrow{\op{\Xi}}\cdots\text{.}
  \end{equation}

  We call \textbf{flagged $(\infty,\mathbb{Z})$-categories} the Segal
  $\op{\Theta}_{\mathbb{Z}}$-objects in $\infgrpds$, and denote
  $\infzcats^{\flagged}$ their $\infty$-category.
\end{definition}

\begin{remark}
  \label{remark:theta-st-ulying-1cat}
  The maps in the underlying tower of $1$-categories
  $\op{\Theta}\xrightarrow{\op{\Xi}}
  \op{\Theta}\xrightarrow{\op{\Xi}}\cdots$ are injective on objects
  and morphisms (so that they will be cofibrations in any reasonable
  model category for $(\infty,1)$-categories), so the colimit in
  $\infcats$ coincides with the strict sequential colimit of
  $1$-categories, which is the category ${\op{\Theta}_{\mathbb{Z}}}$
  defined in~\cref{def:theta-stab}.

  In addition, the elementary subcategory
  ${\op{\Theta}_{\mathbb{Z}}}^{\elem}$ is likewise computed as the
  colimit of
  $\op{\mathbb{G}}\xrightarrow{\op{\Xi}}\op{\mathbb{G}}
  \xrightarrow{\op{\Xi}}\cdots$ so that it also recovers the category
  $\op{\mathbb{G}_{\mathbb{Z}}}$ of~\cref{construct:stable-globe-cat}.
\end{remark}

As in~\cref{remark:globular-pres-segcond}, we can use the
decompositions of~\cref{lemma:stabl-globular-pres} to obtain an
explicit form for the Segal conditions. For this we will also need a
more explicit description of the inert subcategory of
$\op{\Theta_{\mathbb{Z}}}$.

\begin{construct}
  \label{construct:inrt-stable-trees}
  Recall that the inert subcategory of $\Theta$ can be seen as a
  category of trees, as for example
  in~\cite[\S{}2.3.2]{ara10:_sur_groth}. These are functors from
  $\func{T}\colon\op{\omega}\to\cat{LinOrdSet}$, where $\omega$ is the
  ordered set $\{0\leq1\leq2\leq\cdots\}$, satisfying the conditions
  that $\func{T}(0)=\ast$ and there is some $i\gg0$ such that
  $\func{T}(i)=\emptyset$. The pasting shape corresponding to a tree
  $\func{T}$ has as $k$-cells the sectors between branches at level
  $k$, where the left-to-right ordering of sectors gives the
  composition order for sequences of cells.

  A presheaf $\func{T}$ satisfying the finiteness condition but not
  $\func{T}(i)=\ast$ may be known as a \textbf{forest}. The
  endomorphism $\operatorname{succ}\colon\omega\to\omega$ induces a
  desuspension operator $\Eta=\operatorname{succ}^{\ast}$ on forests,
  but as in~\cref{warning-desuspend-infcats-notcell} it does not
  restrict to an endomorphism on trees. Its left-adjoint
  $\operatorname{succ}_{!}$ does restrict to trees, and models the
  suspension functor $\Xi$.

  We can likewise define a \textbf{stable tree} to be a functor
  $\func{T}\colon\op{\mathbb{Z}}\to\cat{LinOrdSet}$ such that
  $\func{T}(i)=\emptyset$ for some $i$ and there is some $j<i$ such
  that for all $j^{\prime}\leq j$, $\func{T}(j^{\prime})=\ast$. We
  denote $\Theta^{\inrt}_{\stabd}$ the category of stable trees. The
  mutually inverse operators
  $\operatorname{succ},\operatorname{prec}
  \colon\mathbb{Z}\to\mathbb{Z}$ induce automorphisms
  $\operatorname{succ}^{\ast}=\operatorname{prec}_{!},
  \operatorname{prec}^{\ast}=\operatorname{succ}_{!}$ of
  $\Theta^{\inrt}_{\stabd}$.
\end{construct}

\begin{pros}
  \label{pros:theta-z-inrt-tree}
  The category $\Theta_{\stabd}^{\inrt}$ is equivalent to
  $\Theta_{\mathbb{Z}}^{\inrt}
  \simeq\colim\bigl(\Theta^{\inrt}\xrightarrow{\Xi}\Theta^{\inrt}
  \xrightarrow{\Xi}\cdots\bigr)$.
\end{pros}

\begin{proof}
  The idea is the same as that of the proof of~\cite[Proposition
  12.3.2]{lessard19:_spect_local_finit_z_group}, though some more care
  is required in handling trees rather than globes.
  
  Consider the inclusion
  $\func{i}\colon\op{\omega}\hookrightarrow\op{\mathbb{Z}}$. It
  induces by left Kan extension an ``infinite suspension'' functor
  $\func{i}_{!}\colon\Theta^{\inrt}\to\Theta_{\stabd}^{\inrt}$:
  explicitly, for any tree $\func{T}$, the stable tree
  $\func{i}_{!}\func{T}$ coincides with $\func{T}$ on positive cells
  (since $\func{i}$ is fully faithful) and has value
  $\func{T}(i)=\ast$ for $i\leq0$. Since the successor operation on
  $\mathbb{Z}$ (and thus also, when applicable, the predecessor
  operation) is compatible with the one on $\omega$, the functors
  $\operatorname{succ}_{!}^{n}\circ\func{i}$ define a cocone, and thus
  induce a unique compatible functor
  $\upsilon\colon\Theta^{\inrt}_{\mathbb{Z}}\to\Theta^{\inrt}_{\stabd}$.

  Now we also construct a functor $\gamma$ going the other way. For
  $\func{T}\in\Theta^{\inrt}_{\stabd}$, take $-i\in\mathbb{Z}_{\leq0}$
  such that $\func{T}(j)\simeq\ast$ for all $j\leq-i$ (existing thanks
  to the stable tree condition); it follows by direct examination that
  $\operatorname{succ}_{!}^{i}\func{T}$ is $\func{i}_{!}$ of an
  unstable tree. We then set
  $\gamma(\func{T})
  =\Xi^{\infty-i}(\func{i}_{!}^{-1}\operatorname{succ}_{!}^{i}\func{T})$.
  Note that if we had picked some $-j<-i$ we would still have
  \begin{equation}
    \label{eq:2}
    \Xi^{\infty-j}(\func{i}_{!}^{-1}\operatorname{succ}_{!}^{j}\func{T})
    \simeq\Xi^{\infty-i}\Xi^{i-j}(
    \func{i}_{!}^{-1}\operatorname{succ}_{!}^{j}\func{T})
    \simeq\Xi^{\infty-i}(\func{i}_{!}^{-1}\operatorname{succ}_{!}^{i}\func{T})
  \end{equation}
  so the assignment $\gamma$ is well-defined on objects, and it also
  follows that it is functorial since we can always suspend as much as
  needed. Eventually, as $\upsilon$ and $\gamma$ are both compatible
  with the suspension operators, it follows formally (thanks to the
  universal property of $\Theta^{\inrt}_{\mathbb{Z}}$) that they are
  inverses.
\end{proof}

\begin{corlr}
  \label{corlr:stabl-glob-pres-final}
  Let $T\in\op{\Theta}_{\mathbb{Z}}$ be determined by the integers
  $n_{1},\dots,n_{k},m_{1},\dots,m_{k-1}\in\mathbb{Z}$ according
  to~\cref{lemma:stabl-globular-pres}. The diagram of stable globes
  \begin{equation}
    \label{eq:final-glob-pres-stabl}
    \begin{tikzcd}
      \overline{n_{1}} \arrow[dr] & & \overline{n_{2}} \arrow[dl]
      \arrow[dr] & \cdots & \overline{n_{k-1}} \arrow[dl] \arrow[dr] &
      & \overline{n_{k}}
      \arrow[dl] \\
      & \overline{m_{1}} & & \cdots & & \overline{m_{k-1}} &
    \end{tikzcd}
  \end{equation}
  is initial in $(\op{\Theta}_{\mathbb{Z}})^{\elem}_{T/}$.
\end{corlr}

\begin{proof}
  Thanks to~\cref{pros:theta-z-inrt-tree}, the proof method
  of~\cite[Lemme 2.3.22]{ara10:_sur_groth} (which
  gave~\cref{remark:globular-pres-segcond}) applies again.
\end{proof}

It follows that the Segal condition for $\op{\Theta_{\mathbb{Z}}}$
takes a form as in~\cref{eq:stab-glob-pres}, and in particular that
Segal $\op{\Theta_{\mathbb{Z}}}$-sets are exactly strict
$\mathbb{Z}$-categories.

\begin{lemma}
  The functors of~\cref{construct:shifts-theta-stab} define Segal
  morphisms of algebraic patterns.
\end{lemma}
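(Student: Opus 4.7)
The plan is to handle separately the invertible shift $\Xi$ (with its inverse $\Eta$) on $\Theta_{\mathbb{Z}}$ and the infinite suspensions $\Xi^{\infty-k}\colon\Theta\to\Theta_{\mathbb{Z}}$.

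For the shift $\Xi$ on $\op{\Theta_{\mathbb{Z}}}$, the preceding proposition applied levelwise to both rows of diagram~\eqref{eq:morph-diagra-shift} exhibits the morphism of towers defining $\Xi$ as a morphism of towers of algebraic patterns. Passing to the colimit then makes $\Xi$ a morphism of algebraic patterns on $\op{\Theta_{\mathbb{Z}}}$, and since its underlying functor is an equivalence with inverse $\Eta$, both $\Xi$ and $\Eta$ are in fact equivalences of algebraic patterns. Any such equivalence is tautologically a Segal morphism.

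For the infinite suspensions $\Xi^{\infty-k}$, each is a morphism of algebraic patterns by construction of $\op{\Theta_{\mathbb{Z}}}$ as a sequential colimit in algebraic patterns: the coprojections into such a colimit are by definition morphisms of algebraic patterns. To verify the Segal property, I will invoke the Chu--Haugseng criterion that a morphism $f\colon\mathcal{P}\to\mathcal{Q}$ of algebraic patterns is Segal iff for every $P\in\mathcal{P}$ the induced functor between elementary inert slices $\mathcal{P}^{\elem}_{P/}\to\mathcal{Q}^{\elem}_{f(P)/}$ is initial. Given $T\in\op{\Theta}$, \cref{remark:globular-pres-segcond} exhibits the globular presentation of $T$ as an initial subdiagram of $(\op{\Theta})^{\elem}_{T/}$, and \cref{corlr:stabl-glob-pres-final} gives the analogous statement for $(\op{\Theta_{\mathbb{Z}}})^{\elem}_{\Xi^{\infty-k}(T)/}$. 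Because $\Xi^{\infty-k}$ preserves the globular gluings defining these presentations and shifts dimensions coherently, it carries the former initial subdiagram bijectively onto the latter, which by a standard cofinality argument implies the initiality of the full functor on inert slices.

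The main obstacle will be pinning down the compatibility between the globular presentation of $T$ and that of $\Xi^{\infty-k}(T)$; once this is verified using the explicit form of $\Xi^{\infty-k}$ as a coprojection into a filtered colimit of algebraic patterns, the remainder of the argument is formal.
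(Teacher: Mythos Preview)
Your argument is correct, and for the invertible shifts $\Xi,\Eta$ it matches the paper exactly. For the infinite suspensions $\Xi^{\infty-k}$ you take a different, more hands-on route: you verify the \emph{strong} Segal condition by matching up the initial globular-presentation subdiagrams of $(\op{\Theta})^{\elem}_{T/}$ and $(\op{\Theta_{\mathbb{Z}}})^{\elem}_{\Xi^{\infty-k}T/}$ via \cref{remark:globular-pres-segcond} and \cref{corlr:stabl-glob-pres-final}. The paper instead observes directly that since $\op{\Theta_{\mathbb{Z}}}$ is a filtered colimit of patterns, a Segal $\op{\Theta_{\mathbb{Z}}}$-object is by the Chu--Haugseng machinery literally a compatible family of Segal $\op{\Theta}$-objects, and pullback along a coprojection is projection to one component---so Segality is immediate without touching slices. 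Your approach costs a bit more work but yields the stronger conclusion that $\Xi^{\infty-k}$ is a strong Segal morphism; the paper's approach is shorter but only gives Segal.

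One small point: the criterion you invoke (``initial on elementary inert slices'') characterises \emph{strong} Segal morphisms, not Segal morphisms in general, so your ``iff'' is not quite accurate. This does not affect the argument, since strong Segal implies Segal, but you should state it as a sufficient condition. Also, the ``standard cofinality argument'' you allude to is the right-cancellation property of initial functors (if $g$ and $fg$ are initial then so is $f$), which is indeed standard and follows from the limit characterisation; it would not hurt to name it explicitly.
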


\begin{proof}
  That they are morphisms of algebraic patterns is immediate from the
  fact that they are canonical morphisms induced by a colimit in the
  $\infty$-category of algebraic patterns.

  The functors $\Xi$ and $\Eta$ are Segal because they are invertible
  between identical pattern structures. Pulling back a
  $\op{\Theta_{\mathbb{Z}}}$-object along a $\Xi^{\infty-k}$ is simply
  restricting it to one component, so again Segality is clear.
\end{proof}

Note that it is not clear that $\Xi^{\infty-k}$ is extendable, that is
satisfies the hypotheses of~\cite[Proposition
7.13]{chu21:_homot_segal}, so that the left-adjoint
$\Xi^{\infty-k}_{!}$ to $\Xi^{\infty-k,\ast}$ needs to be postcomposed
with the ``Segalification'' localisation. Since the only thing we will
need about $\Xi^{\infty-k}_{!}$ is its left-adjoint property, we will
simply omit this localisation from the notation.

\begin{definition}
  \label{def:shifting-ops-flagged-zcats}
  We call
  $\Xi^{\infty-k}_{!}\colon\infinfcats^{\flagged}\to\infzcats^{\flagged}$
  the $k$-shifted \textbf{infinite suspension} and
  $\Eta^{\infty-k}_{!}\coloneqq\Xi^{\infty-k,\ast}\colon\infzcats^{\flagged}
  \to\infinfcats^{\flagged}$ the $k$-shifted \textbf{infinite
    desuspension}.

  We will refer to $\Xi_{!}\simeq\Eta^{\ast}$ and
  $\Eta_{!}\simeq\Xi^{\ast}$ as the \textbf{shifting} (and
  unshifting) operations.
\end{definition}

\subsection{Stable univalence}
\label{sec:stable-univalence}

Now we also want to move the functor
$\op{\Xi}\colon\op{\Theta}\to\op{\Theta}$ out of the patterns and on
to the realm of Segal objects. For this we simply observe the
following:

\begin{lemma}
  The morphism of algebraic patterns
  $\op{\Xi}\colon\op{\Theta}\to\op{\Theta}$ is a strong Segal
  morphism.
\end{lemma}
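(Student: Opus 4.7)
The plan is to reduce the strong Segal condition for $\op{\Xi}$ to an elementary combinatorial claim about pasting shapes and their suspensions, using the tree model from~\cref{construct:inrt-stable-trees}. Recall that for a morphism of algebraic patterns, the strong Segal condition (in the sense of~\cite{chu21:_homot_segal}) asks that for each object $T\in\op{\Theta}$ the induced functor on elementary slice categories
\begin{equation*}
  {\op{\Xi}}_{T/}\colon{\op{\Theta}}^{\elem}_{T/}
  \longrightarrow{\op{\Theta}}^{\elem}_{\Xi T/}
\end{equation*}
is initial. I will in fact prove the stronger statement that it is an equivalence of $1$-categories.

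First I would unpack the source category: an object of ${\op{\Theta}}^{\elem}_{T/}$ is an inert morphism $\overline{n}\to T$ in $\Theta$ from an elementary globe, and morphisms are generated by the globular source/target inclusions $i_{n}^{\pm}$. Under the tree description, such an inert map corresponds precisely to selecting an $n$-cell of the pasting shape $T$, i.e.\ a sector of the tree $\func{T}\colon\op{\omega}\to\cat{LinOrdSet}$ at level $n$, and the morphisms between them are exactly the source/target relations between the sectors.

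Next I would observe that $\Xi$ on the inert subcategory corresponds to $\operatorname{succ}_{!}$ on trees, which simply shifts $\func{T}$ up by one level and inserts a unique $0$-cell. This produces a natural bijection between level-$n$ sectors of $\func{T}$ and level-$(n+1)$ sectors of $\operatorname{succ}_{!}\func{T}$, compatible with source/target inclusions because $\Xi$ sends $i_{n}^{\pm}$ to $i_{n+1}^{\pm}$. The one mildly delicate point is that the unique $0$-cell of $\Xi T$ (which was not present in $T$) might look like an extra object of ${\op{\Theta}}^{\elem}_{\Xi T/}$; however, every inert map $\overline{0}\to\Xi T$ factors through a source/target inclusion from some $\overline{n+1}$, so it is not a new connected component and does not disturb the equivalence—this is the main point where one must be a little careful.

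Putting this together, ${\op{\Xi}}_{T/}$ is an essentially surjective, fully faithful functor between ordinary categories, hence an equivalence; since any equivalence is initial, $\op{\Xi}$ is a strong Segal morphism. The expected obstacle is purely bookkeeping, namely matching the exact convention for ``strong Segal'' used in~\cite{chu21:_homot_segal} with the slice equivalence produced by the tree calculation; the combinatorics itself is tautological once the tree description of~\cref{construct:inrt-stable-trees} is in hand.
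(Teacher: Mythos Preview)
Your claim that ${\op{\Xi}}_{T/}$ is an \emph{equivalence} is false, and the ``mildly delicate point'' you flag is precisely where it fails. For any nonempty $T$ the suspension $\Xi T$ has exactly two $0$-cells, and each gives an object $(\overline{0}\to\Xi T)$ of the elementary slice over $\Xi T$. These objects are \emph{not} in the essential image of $\Xi$ acting on $\Theta^{\elem}_{/T}$, since $\Xi$ only produces inert maps out of $\overline{n}$ with $n\geq 1$. Being connected to the image via a source/target inclusion is not the same as being isomorphic to something in the image, so ``not a new connected component'' does not rescue essential surjectivity. Already for $T=\overline{0}$ the domain has one object while the codomain has three.

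Your approach can be repaired: what is actually needed is finality (equivalently, initiality in $\op{\Theta}$), and for this it suffices to show that for each of the two extra $0$-cell objects the relevant comma category is contractible. This is true but requires an argument (it amounts to the contractibility of $\Theta^{\elem}_{/T}$ itself). The paper avoids this detour entirely: it recalls from~\cref{remark:globular-pres-segcond} that the globular presentation of $T$ gives a final zigzag subcategory of $\Theta^{\elem}_{/T}$, observes that $\Xi$ sends this zigzag isomorphically to the globular presentation of $\Xi T$ (every index shifts by one), and concludes finality by the $2$-out-of-$3$-style reasoning with final subcategories on both sides. That argument is shorter and sidesteps the issue of the extra $0$-cells altogether.
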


\begin{proof}
  We need to check that for every $T\in\Theta$, the induced functor
  $\Theta^{\elem}_{/T}\to\Theta^{\elem}_{/\Xi T}$ is final. Recall
  that the globular presentation
  \begin{equation}
    \label{eq:globular-pres}
    T=\overline{n_{1}}\lmamalg{\overline{m_{1}}}\overline{n_{2}}
    \lmamalg{\overline{m_{2}}}\cdots
    \lmamalg{\overline{m_{k-1}}}\overline{n_{k}}
  \end{equation}
  provides a final subcategory of $\Theta^{\elem}_{/T}$, and
  likewise the globular presentation of $\Xi T$ is
  \begin{equation}
    \label{eq:globular-pres-shift}
    \Xi T=\overline{n_{1}+1}\lmamalg{\overline{m_{1}+1}}\overline{n_{2}+1}
    \lmamalg{\overline{m_{2}+1}}\cdots
    \lmamalg{\overline{m_{k-1}+1}}\overline{n_{k}+1}\text{.}
  \end{equation}
  One then sees immediately that $\Xi$ sends the globular presentation
  of $T$ to that of $\Xi T$, so that the induced functor is indeed
  final.
\end{proof}

\begin{remark}
  It follows that $\Xi$ induces a functor
  $\Xi^{\ast}\colon\infinfcats^{\flagged}\to\infinfcats^{\flagged}$. If
  $\cat{C}$ is a flagged $(\infty,\omega)$-category, then
  $\Xi^{\ast}\cat{C}$ evaluates on a globe $\overline{n}$ to
  $\cat{C}_{n+1}$, the type of $(n+1)$-cells of $\cat{C}$. We can thus
  think of $\Xi^{\ast}$ as a desuspension operator for flagged
  $(\infty,\omega)$-categories.

  It admits a left-adjoint $\Xi_{!}$, given by Yoneda extension, that
  is left Kan extension of Segal presheaves.
\end{remark}

\begin{corlr}
  \label{corlr:flaggd-zcats-lim-suspensions}
  The $(\infty,1)$-category $\infzcats^{\flagged}$ is the limit of the
  tower of $(\infty,1)$-categories
  \begin{equation}
    \label{eq:tower-infinfcats-desuspend}
    \cdots\xrightarrow{\Xi^{\ast}}\infinfcats^{\flagged}
    \xrightarrow{\Xi^{\ast}}\infinfcats^{\flagged}
    \xrightarrow{\Xi^{\ast}}\infinfcats^{\flagged}\text{.}
  \end{equation}
\end{corlr}

\begin{proof}
  At the level of presheaf $\infty$-categories, we have as usual that
  \begin{equation}
    \label{eq:limit-preshf-cat-theta}
    \funcs{\op{\Theta}_{\mathbb{Z}}}{\infgrpds}
    =\funcs{\colim_{\mathbb{N}}\op{\Theta}}{\infgrpds}
    \simeq\lim_{\mathbb{N}}\funcs{\op{\Theta}}{\infgrpds}\text{.}
  \end{equation}
  It then only remains to observe, as in~\cite{chu21:_homot_segal},
  that since the Segal condition is a limit condition, this
  equivalence restricts to Segal presheaves.
\end{proof}

Thus a flagged $(\infty,\mathbb{Z})$-category is given by a sequence
$(\cat{C}_{k})_{k\in\mathbb{N}}$ of flagged
$(\infty,\omega)$-categories equipped with equivalences $\cat{C}_{k}\simeq\Xi^{\ast}C_{k+1}$.

\begin{remark}
  The shifting operators of~\cref{def:shifting-ops-flagged-zcats} can
  now be identified with ones defined purely in terms
  of~\cref{eq:tower-infinfcats-desuspend}: the shifting operator
  $\Xi_{!}$ and $\Eta_{!}$ come directly from the periodicity of the
  diagram in the same way that $\Xi$ and $\Eta$ did (so we have
  explicit formulae
  \begin{equation}
    \label{eq:shift-explicit}
    \Xi_{!}((\cat{C}_{k})_{k\in\mathbb{N}})
    =(\cat{C}_{k+1})_{k\in\mathbb{N}}\text{,}
  \end{equation} and
  \begin{equation}
    \label{eq:unshift-explicit}
    \Eta_{!}((\cat{C}_{k})_{k\in\mathbb{N}})
    =(\Xi^{\ast}\cat{C}_{k})_{k\in\mathbb{N}}
    =(\cat{C}_{k-1})_{k\in\mathbb{N}}
  \end{equation}
  with $\cat{C}_{-1}\coloneqq\Xi^{\ast}\cat{C}_{0}$), and more
  importantly the $k$-shifted infinite desuspension functor
  $\Eta^{\infty-k}_{!}\colon\infzcats^{\flagged}\to\infinfcats^{\flagged}$
  is the projection on the $k$th factor.
\end{remark}

\begin{lemma}
  If a flagged $(\infty,\omega)$-category $\cat{C}$ is
  univalent-complete, then so is its desuspension $\Xi^{\ast}\cat{C}$.
\end{lemma}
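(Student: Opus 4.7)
My plan is to reduce the univalent-completeness of $\Xi^{\ast}\cat{C}$ to that of $\cat{C}$ via the adjunction $\Xi_{!}\dashv\Xi^{\ast}$ already noted for presheaf $\infty$-categories. The preceding lemma ensures that $\Xi^{\ast}\cat{C}$ remains a Segal presheaf, so the definition of univalent-completeness applies to it. Recall that, for a Segal presheaf $\func{X}$, univalent-completeness amounts to being local with respect to the countable family of morphisms $\Xi^{n}(\walkeq\to\ast)$ for $n\geq0$, where the suspension ``$\Xi$'' on presheaves is the colimit-preserving left Kan extension $\Xi_{!}$ of $\Xi\colon\Theta\to\Theta$, and $\ast=\yo(\cat{D}_{0})$.

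The core step is the adjunction identification
\[
\map\bigl(f,\Xi^{\ast}\cat{C}\bigr)\simeq\map\bigl(\Xi_{!}f,\cat{C}\bigr)
\]
for any morphism $f$ of presheaves on $\Theta$, which translates locality of $\Xi^{\ast}\cat{C}$ with respect to $f$ into locality of $\cat{C}$ with respect to $\Xi_{!}f$. I then need to compute $\Xi_{!}$ applied to the $n$-th suspended walking equivalence: since $\walkeq$ is defined as a colimit of representable presheaves on globes, since $\Xi_{!}$ is cocontinuous, and since $\Xi_{!}\yo(\cat{D}_{k})\simeq\yo(\cat{D}_{k+1})$, a direct check yields
\[
\Xi_{!}\bigl(\Xi^{n}(\walkeq\to\ast)\bigr)\simeq\Xi^{n+1}(\walkeq\to\ast).
\]

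Putting the two together, the family of locality conditions for $\Xi^{\ast}\cat{C}$ is translated into the subfamily $\{\Xi^{m}(\walkeq\to\ast)\}_{m\geq1}$ of those defining univalent-completeness for $\cat{C}$; since $\cat{C}$ is univalent-complete it satisfies them all, and so $\Xi^{\ast}\cat{C}$ is univalent-complete. I do not anticipate any real obstacle: the proof is essentially a one-line adjunction argument, the only bookkeeping being the identification of the ``$\Xi$'' appearing in the locality conditions with the left adjoint $\Xi_{!}$ to $\Xi^{\ast}$, which holds by construction of the suspension on presheaves via left Kan extension along Yoneda.
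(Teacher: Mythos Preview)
Your proof is correct and follows essentially the same approach as the paper: both use the adjunction $\Xi_{!}\dashv\Xi^{\ast}$ to identify locality of $\Xi^{\ast}\cat{C}$ with respect to $\Xi^{n}(\walkeq\to\ast)$ with locality of $\cat{C}$ with respect to $\Xi^{n+1}(\walkeq\to\ast)$, which holds by assumption. Your additional remark on why $\Xi_{!}$ shifts the suspended walking equivalence by one (cocontinuity plus the action on representables) simply makes explicit what the paper's proof leaves implicit.
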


\begin{proof}
  We need to check that $\Xi^{\ast}\cat{C}$ is local with respect to
  the functors $\Xi_{!}^{n}\walkeq\to\Xi_{!}^{n}\ast=\cat{D}_{n}$, for
  all $n\geq0$, that is that the induced functor
  $\hom(\cat{D}_{n},\Xi^{\ast}\cat{C})
  \to\hom(\Xi_{!}^{n}\walkeq,\Xi^{\ast}\cat{C})$ is an equivalence. By
  adjunction, this map is equivalent to
  \begin{equation}
    \label{eq:adjunct-walkeq-local}
    \hom(\Xi_{!}\cat{D}_{n},\cat{C})=\hom(\cat{D}_{n+1},\cat{C})
    \to\hom(\Xi_{!}^{n+1}\walkeq,\cat{C})
  \end{equation}
  which is invertible because $\cat{C}$ is univalent-complete.
\end{proof}

\begin{warning}
  \label{warning:univalence-not-stable}
  The converse fails to be true, because univalence-completeness of
  $\Xi^{\ast}\cat{C}$ only implies that $\cat{C}$ is local with
  respect to the functors $\Xi_{!}^{n}\walkeq\to\cat{D}_{n}$ for
  $n\geq1$, but does not recover the case $n=0$.
\end{warning}

Thus the restriction of $\Xi^{\ast}$ to univalent-complete Segal
$\op{\Theta}$-objects defines a functor
$\Xi^{\ast}\colon\infinfcats\to\infinfcats$.

\begin{lemma}
  Let $\cat{I}$ be an $(\infty,1)$-category, let
  $\func{D},\func{D}^{\prime}\colon\cat{I}\to\infcats$ be two
  $\cat{I}$-indexed diagrams of $(\infty,1)$-categories, and let
  $\alpha\colon\func{D}\Rightarrow\func{D}^{\prime}$ be a
  transformation of diagrams which is component-wise fully
  faithful. Then the induced functor
  $\varprojlim_{\cat{I}}\eta
  \colon\varprojlim_{\cat{I}}\func{D}
  \to\varprojlim_{\cat{I}}\func{D}^{\prime}$
  is also fully faithful.
\end{lemma}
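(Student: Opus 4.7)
The plan is to reduce fully faithfulness to a pointwise statement on mapping spaces and then exploit the fact that mapping spaces in a limit of $\infty$-categories are themselves limits of mapping spaces. Recall that a functor $F\colon\cat{C}\to\cat{D}$ of $\infty$-categories is fully faithful precisely when, for every pair of objects $x,y\in\cat{C}$, the induced map $\map_{\cat{C}}(x,y)\to\map_{\cat{D}}(Fx,Fy)$ is an equivalence of $\infty$-groupoids.

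First, I would invoke the standard description of mapping spaces in $\varprojlim_{\cat{I}}\func{D}$: an object of the limit can be represented by a coherent family $(x_{i})_{i\in\cat{I}}$ with $x_{i}\in\func{D}(i)$, and given two such families $x,y$, we have a natural equivalence
\begin{equation*}
  \map_{\varprojlim_{\cat{I}}\func{D}}(x,y)
  \simeq\varprojlim_{i\in\cat{I}}\map_{\func{D}(i)}(x_{i},y_{i})\text{,}
\end{equation*}
and similarly for $\func{D}^{\prime}$. This is a consequence of the fact that the limit in $\infcats$ represents the $\cat{I}$-indexed coherent families and that $\map$ is computed by a cotensor; it can also be seen as a direct application of the universal property of the limit in the Cartesian model structure / straightening framework.

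Given this, the functor $\varprojlim_{\cat{I}}\alpha$ sends $x=(x_{i})$ to $\alpha(x)=(\alpha_{i}(x_{i}))$, and the induced map on mapping spaces factors through the diagram
\begin{equation*}
  \begin{tikzcd}
    \varprojlim_{i}\map_{\func{D}(i)}(x_{i},y_{i})
    \arrow[r] \arrow[d,"\simeq"'] &
    \varprojlim_{i}\map_{\func{D}^{\prime}(i)}(\alpha_{i}x_{i},\alpha_{i}y_{i})
    \arrow[d,"\simeq"] \\
    \map_{\varprojlim\func{D}}(x,y) \arrow[r] &
    \map_{\varprojlim\func{D}^{\prime}}(\alpha x,\alpha y)
  \end{tikzcd}
\end{equation*}
where the vertical maps are the equivalences recalled above. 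By hypothesis each component $\alpha_{i}$ is fully faithful, so each map $\map_{\func{D}(i)}(x_{i},y_{i})\to\map_{\func{D}^{\prime}(i)}(\alpha_{i}x_{i},\alpha_{i}y_{i})$ is an equivalence of $\infty$-groupoids, and since limits preserve pointwise equivalences the top horizontal arrow is an equivalence. Hence so is the bottom one, which is exactly the required fully faithfulness of $\varprojlim_{\cat{I}}\alpha$.

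The only nontrivial step is the mapping-space formula for a limit of $\infty$-categories, but this is a well-known feature of $\infcats$ (essentially the defining universal property of the limit once one identifies $\map(\Delta^{1},-)$ with the arrow $\infty$-category and uses that $\cat{Fun}(-,-)$ commutes with limits in the target); beyond that the argument is purely formal.
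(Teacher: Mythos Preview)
Your proof is correct and complete; the mapping-space formula for limits in $\infcats$ is indeed well-known and your reduction to a pointwise equivalence of $\infty$-groupoids is the most direct route.

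The paper takes a different, more formal path: rather than computing mapping spaces, it invokes the characterisation of fully faithful functors as those right-orthogonal to the class of essentially surjective functors (citing \cite{loubaton24:_categ_theor_categ}). A lifting problem against $\varprojlim_{\cat{I}}\alpha$ with an essentially surjective left leg is transported, via the adjunction $\func{const}\dashv\varprojlim$, to a lifting problem in $\funcs{\cat{I}}{\infcats}$ against $\alpha$ itself; since $\alpha$ is pointwise fully faithful and $\func{const}$ of an essentially surjective functor is pointwise essentially surjective, this transported problem has an essentially unique solution, which adjoints back. Your argument is more concrete and makes the result transparent in terms of mapping spaces, while the paper's argument avoids unpacking objects of the limit and would generalise verbatim to any class of morphisms arising as the right class of a factorisation system. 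Either proof is entirely acceptable here.
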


\begin{proof}
  An $(\infty,1)$-functor is fully faithful if and only if it is
  right-orthogonal to the class of functors that are essentailly
  surjective on objects (\emph{cf.}~\cite[Proposition
  2.2.1.75]{loubaton24:_categ_theor_categ}). Let us thus consider a
  lifting problem as below-left
  \begin{equation}
    \label{eq:lift-pb-eso-limcat}
    \begin{tikzcd}
      \cat{A} \arrow[d,"{\func{q}}"'] \arrow[r] &
      \varprojlim_{\cat{I}}\func{D}
      \arrow[d,"{\varprojlim_{\cat{I}}\eta}"] \\
      \cat{B} \arrow[r] & \varprojlim_{\cat{I}}\func{D}^{\prime}
    \end{tikzcd}\qquad\qquad\begin{tikzcd}
      \func{const}\cat{A} \arrow[d,"{\func{const}(\func{q})}"']
      \arrow[r] & \func{D}
      \arrow[d,"{\eta}"] \\
      \func{const}\cat{B} \arrow[r] \arrow[ur,dashed] &
      \func{D}^{\prime}
    \end{tikzcd}
  \end{equation}
  where $\func{q}$ is essentially surjective. Since the limit functor
  is right-adjoint to the constant diagram functor $\func{const}$,
  this square corresponds to a lifting problem of diagrams as
  above-right.

  But $\func{const}(\func{q})$ is essentially surjective while $\eta$
  is fully faithful, so this square admits an essentially unique
  dashed lift, which can be adjointed to a(n essentially unique)
  solution to the original lifting problem.
\end{proof}

\begin{definition}
  The $\infty$-category of \textbf{$(\infty,\mathbb{Z})$-categories}
  is the essential image of the functor
  $\lim\bigl(\cdots\xrightarrow{\Xi^{\ast}}\infinfcats
  \xrightarrow{\Xi^{\ast}}\infinfcats\bigr)
  \hookrightarrow\infzcats^{\flagged}$ (induced by the inclusions
  $\infinfcats\hookrightarrow\infinfcats^{\flagged}$).
\end{definition}

Let $\cat{C}$ be a flagged $(\infty,\mathbb{Z})$-category, seen as a
sequence $(\cat{C}_{0},\cat{C}_{1},\dots)$ of flagged
$(\infty,\omega)$-categories with
$\cat{C}_{i}\simeq\Xi^{\ast}\cat{C}_{i+1}$ for all
$i\in\mathbb{N}$. Then $\cat{C}$ is an $(\infty,\mathbb{Z})$-category
if and only if each $\cat{C}_{i}$ is univalent complete.

\begin{definition}
  A flagged $(\infty,\mathbb{Z})$-category is \textbf{stably
    univalent-complete} if it is local with respect to the maps
  $\Xi^{\infty+n}_{!}\walkeq\to\Xi^{\infty+n}\ast$ for all
  $n\in\mathbb{Z}$.
\end{definition}

\begin{remark}
  In contrast to~\cref{warning:univalence-not-stable}, a flagged
  $(\infty,\mathbb{Z})$-category $\cat{C}$ is an
  $(\infty,\mathbb{Z})$-category if and only if its desuspension
  $\Eta_{!}\cat{C}$ is so (if and only if its suspension
  $\Xi_{!}\cat{C}$ is), and the same is true for stable univalence.
\end{remark}

\begin{pros}
  A flagged $(\infty,\mathbb{Z})$-category $\cat{C}$ is an
  $(\infty,\mathbb{Z})$-category if and only if it is stably
  univalent-complete.
\end{pros}

\begin{proof}
  Suppose first that $\cat{C}$ is stably univalent-complete; in
  particular for any $n,k\geq0$ the map
  $\hom(\Xi^{\infty+n-k}_{!}\ast,\cat{C})
  \to\hom(\Xi^{\infty+n-k}_{!}\walkeq,\cat{C})$ is an equivalence. This
  map is equivalent to
  $\hom(\Xi^{n}\ast,\Eta^{\infty-k}_{!}\cat{C})
  \to\hom(\Xi^{n}\ast,\Eta^{\infty-k}_{!}\cat{C})$. Since
  $\Eta^{\infty-k}_{!}\cat{C}$ is $\cat{C}_{k}$, this shows that $\cat{C}$
  is an $(\infty,\mathbb{Z})$-category.

  Now suppose that $\cat{C}$ is an $(\infty,\mathbb{Z})$-category, and
  let $n\in\mathbb{\mathbb{Z}}$. If $n\geq0$, reversing the above
  reasoning shows that $\cat{C}$ is local for the map
  $\Xi^{\infty+n}_{!}\walkeq\to\Xi^{\infty+n}\ast$. If $n<0$, pick
  $k\geq-n$. Then
  $\hom(\Xi^{\infty+n}\ast,\cat{C})
  \to\hom(\Xi^{\infty+n}\walkeq,\cat{C})$ is equivalent to
  $\hom(\Xi^{\infty+n+k}\ast,\Xi^{k}\cat{C})
  \to\hom(\Xi^{\infty+n+k}\walkeq,\Xi^{k}\cat{C})$ as $\Xi^{k}$ is in
  particular fully faithful, so by the above it is invertible since
  $\Xi^{k}\cat{C}$ is an $(\infty,\mathbb{Z})$-category and
  $n+k\geq0$.
\end{proof}

\section{Comparison with categorical spectra}
\label{sec:comp-with-categ-sp}

\subsection{Categorical spectra}
\label{sec:recoll-categ-spectra}

\begin{construct}
  The \textbf{reduced suspension} of a pointed
  $(\infty,\omega)$-category $\ast\xrightarrow{\characmap{C}}\cat{C}$
  is the pushout
  \begin{equation}
    \label{eq:red-susp-def}
    \begin{tikzcd}[column sep=large]
      \cat{D}_{1}=\Xi_{\biptd}\ast \arrow[d]
      \arrow[r,"\Xi_{\biptd}\characmap{C}"] \arrow[dr,phantom,very
      near
      end,"\ulcorner"description] & \Xi_{\biptd}\cat{C} \arrow[d] \\
      \ast \arrow[r] & \Sigma_{C}\cat{C}\text{.}
    \end{tikzcd}
  \end{equation}

  The functor $\Sigma\colon\infinfcats_{\ast}\to\infinfcats_{\ast}$
  admits a right-adjoint denoted $\Omega$: by a dualisation
  of~\cite[Proposition 5.2.5.1]{lurie09:_higher},
  $\Xi_{\biptd}\colon(\ast\downarrow\infinfcats)
  \to(\Xi_{\biptd}\ast\downarrow\infinfcats_{\ast,\ast})$ has a
  right-adjoint mapping $\cat{D}_{1}\to\cat{C}$ to
  $\ast\xrightarrow{\simeq}\Eta_{\biptd}\cat{D}_{1}\to\Eta_{\biptd}\cat{C}$,
  while the cobase-change $(-\amalg_{\cat{D}_{1}}\ast)$ has a
  right-adjoint given by precomposition along
  $\cat{D}_{1}\to\ast$. Concretely, it acts as
  \begin{equation}
    \label{eq:ptd-loop-def}
    \Omega\colon(\cat{C},C)
    \mapsto\Omega_{C}\cat{C}\coloneqq\hom_{\cat{C}}(C,C)\text{.}
  \end{equation}
\end{construct}

\begin{exmp}
  \label{exm:suspend-free-ptd}
  Let $\cat{C}$ be an $(\infty,\omega)$-category, and consider the
  freely pointed $(\infty,\omega)$-category
  $\cat{C}_{+}\simeq\cat{C}\amalg\ast$. Then
  \begin{equation}
    \label{eq:suspend-free-ptd}
    \Sigma\cat{C}_{+}
    \simeq\Xi_{\biptd}\cat{C}\lmamalg{\ast\amalg\ast}\ast
  \end{equation}
  as used in~\cite[Definition
  13.2.1]{lessard19:_spect_local_finit_z_group}.
\end{exmp}

\begin{construct}
  Thus far we have worked with $(\infty,\omega)$-categories seen only
  as Segal $\op{\Theta}$-$\infty$-groupoids. To relate our
  constructions to those of~\cite{stefanich21:_higher_quasic_sheav},
  we will need to use another point of view.


  By~\cite{goldthorpe23:_homot_theor_categ_univer_fixed}, we can see
  $(\infty,\omega)$-categories as $\infty$-categories enriched in
  $\infinfcats$. From this point of view, the desuspension operation
  takes a very simple form: it takes a pointed $\infinfcats$-enriched
  category $(\cat{C},C\in\obj\cat{C})$ to $\hom(C,C)$ (formalised
  in~\cite[Notation 13.1.4]{stefanich21:_higher_quasic_sheav} as the
  full sub-$\infinfcats$-category of $\cat{C}$ on the object $C$, seen
  as an algebra in $\infinfcats$ and forgetting the algebra structure
  bar the pointing at the unit).

  Finally, we recall that~\cite[Theorem 7.18 and
  Corollaries]{haugseng15:_rectif} provides an equivalence between
  these two points of view, and by~\cite[Lemma
  3.1.4]{masuda24:_algeb_categ_spect} the two definitions of reduced
  (de)suspenstion agree.
\end{construct}

\begin{definition}
  The $(\infty,1)$-category of \textbf{categorical spectra} is the
  limit
  \begin{equation}
    \label{eq:catl-spectra-def}
    \catsp\coloneqq\lim\bigl(\cdots\xrightarrow{\Omega}\infinfcats_{\ast}
    \xrightarrow{\Omega}\infinfcats_{\ast}
    \xrightarrow{\Omega}\infinfcats_{\ast}\bigr)\text{.}
  \end{equation}
\end{definition}


All that remains to do is to reduce the arguments about pointed
$(\infty,\omega)$-categories to arguments about sheaves. The
following result is standard, but we record the proof as we have not
seen it in the literature.

\begin{lemma}
  \label{lemma:ptd-prshvs}
  Let $\cat{C}$ be an $(\infty,1)$-category endowed with either a
  Grothendieck topology (so we can talk of sheaves on $\cat{C}$) or an
  algebraic pattern structure (so we can talk of presheaves respecting
  the Segal condition, also seen as ``sheaves'').  The
  $\infty$-categories of (pre)sheaves of pointed $\infty$-groupoids
  and of pointed (pre)sheaves on $\cat{C}$ are equivalent.
\end{lemma}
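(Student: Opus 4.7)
The plan is to exhibit pointed (pre)sheaves as a coslice of the (pre)sheaf category and then invoke the standard commutation of coslices with functor $\infty$-categories. Specifically, I would first identify $\infgrpds_{\ast}$ with the coslice $(\infgrpds)_{\ast/}$ under the terminal object, so the question becomes whether
\begin{equation*}
  \funcs{\op{C}}{\infgrpds_{\ast/}}\simeq\funcs{\op{C}}{\infgrpds}_{\const\ast/}
\end{equation*}
both at the level of presheaves and after restriction to sheaves or Segal presheaves.

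For the presheaf statement, I would appeal to the standard equivalence $\funcs{I}{\cat{D}_{d/}}\simeq\funcs{I}{\cat{D}}_{\const d/}$ available for any $\infty$-category $\cat{D}$, any $d\in\cat{D}$, and any parameterising $\infty$-category $\cat{I}$ (provable, e.g., from the universal property of coslice $\infty$-categories, or from the fact that $\cat{D}_{d/}\to\cat{D}$ is the left fibration classified by $\map_{\cat{D}}(d,-)$). Taking $\cat{I}=\op{\cat{C}}$, $\cat{D}=\infgrpds$ and $d=\ast$, and observing that $\const\ast$ is the terminal object of $\funcs{\op{C}}{\infgrpds}$, this delivers the desired equivalence of underlying presheaf categories, exchanging the two interpretations of ``pointing'' (a functor valued in pointed groupoids versus a functor equipped with a natural transformation from $\const\ast$).

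To restrict to sheaves or Segal presheaves, I would use that the forgetful functor $\infgrpds_{\ast}\to\infgrpds$ creates small limits, so a presheaf $\func{F}\colon\op{\cat{C}}\to\infgrpds_{\ast}$ satisfies a covering sieve descent or Segal condition if and only if its underlying $\infgrpds$-valued presheaf does; conversely, a pointed presheaf $(\func{G},\const\ast\to\func{G})$ on the right-hand side satisfies the sheaf/Segal condition as a pointed object precisely when $\func{G}$ does, since the pointing imposes no further limit condition. Since the presheaf-level equivalence is built pointwise from the forgetful functor, the two conditions correspond, and the equivalence restricts to sheaves/Segal presheaves on both sides.

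The only mildly delicate point---not really an obstacle---is the observation that the forgetful functor from the coslice creates limits; this follows from the general fact that limits in a coslice are computed in the base together with the induced structure map from the constant cone. Once this and the slice-of-functor-category equivalence are in hand, the restriction step is automatic.
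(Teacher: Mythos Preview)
Your proposal is correct and follows essentially the same route as the paper: the paper writes the coslice as a comma and uses that $\funcs{\op{\cat{C}}}{-}$ preserves commas to obtain $\funcs{\op{\cat{C}}}{\ast\downarrow\infgrpds}\simeq\ast\downarrow\funcs{\op{\cat{C}}}{\infgrpds}$, then passes to sheaves via the fact (cited from \cite[Proposition 1.2.13.8]{lurie09:_higher}) that limits in undercategories are computed in the projection, which is exactly your ``forgetful functor creates limits'' observation.
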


\begin{proof}
  As slice $\infty$-categories can be written as commas and hom
  functors preserve limits, we have
  \begin{equation}
    \label{eq:hom-commas-ptd-shves}
    \begin{split}
      \funcs{\op{\cat{C}}}{\infgrpds_{\ast}}
      &=\funcs{\op{\cat{C}}}{\ast\downarrow\infgrpds}\\
      &\simeq\funcs{\op{\cat{C}}}{\ast}\downarrow
      \funcs{\op{\cat{C}}}{\infgrpds}
      \simeq\ast\downarrow\funcs{\op{\cat{C}}}{\infgrpds}\text{.}
    \end{split}
  \end{equation}

  The passage to sheaves follows from the fact (proved
  in~\cite[Proposition 1.2.13.8]{lurie09:_higher}) that limits in
  undercategories are computed in the projection.
\end{proof}

\begin{lemma}
  \label{lemma:desusp-is-desusp}
  Under the equivalence
  $\infinfcats_{\ast}
  \simeq\funcs{\op{\Theta}}{\infgrpds_{\ast}}^{\text{Seg,univ-cplt}}$,
  the reduced desupension functor $\Omega$ corresponds to
  $\Xi^{\ast}$.
\end{lemma}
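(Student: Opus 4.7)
The plan is to identify $\Omega$ and $\Xi^{\ast}$ as right adjoints, in the category of pointed Segal-and-univalent-complete presheaves, to one and the same functor — namely the reduced suspension $\Sigma$ — and then conclude by uniqueness of adjoints.

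First, I would recall that in the (unpointed) $\infty$-category of Segal presheaves on $\op{\Theta}$, the functor $\Xi^{\ast}$ is right adjoint to the left Kan extension $\Xi_{!}$, which on a representable $\hom(-,T)$ acts as $\hom(-,\Xi T)$. In particular $\Xi_{!}\ast \simeq \cat{D}_{1}$, since $\ast = \hom(-,\cat{D}_{0})$ is the representable of the terminal object of $\Theta$ and $\Xi\cat{D}_{0}=\cat{D}_{1}$.

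Next, I would pass to pointed objects via \cref{lemma:ptd-prshvs}. Because $\Xi^{\ast}$ preserves the terminal presheaf, it lifts to a right adjoint on pointed objects by the formula $(\func{X},x)\mapsto(\Xi^{\ast}\func{X},\Xi^{\ast}x)$. Its pointed left adjoint $\Xi_{!}^{\text{pt}}$ is then computed by the standard formula for left adjoints between categories of pointed objects: for a pointed Segal presheaf $(\func{X},x)$,
\begin{equation*}
  \Xi_{!}^{\text{pt}}(\func{X},x)\simeq\Xi_{!}\func{X}\lmamalg{\Xi_{!}\ast}\ast\text{.}
\end{equation*}
Applied to a pointed $(\infty,\omega)$-category $(\cat{C},C)$, whose image as a pointed Segal presheaf is $(\hom(-,\cat{C}),\text{const}_{C})$, this pushout becomes
\begin{equation*}
  \Xi_{!}^{\text{pt}}(\cat{C},C)\simeq\Xi_{\biptd}\cat{C}\lmamalg{\cat{D}_{1}}\ast\text{,}
\end{equation*}
in which the attaching map $\cat{D}_{1}\to\Xi_{\biptd}\cat{C}$ is $\Xi_{\biptd}(\characmap{C})$. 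This is precisely the definition of $\Sigma(\cat{C},C)$ from the start of \S\ref{sec:recoll-categ-spectra}. Since $\Omega$ is by construction the right adjoint of $\Sigma$, uniqueness of right adjoints gives the desired equivalence $\Omega\simeq\Xi^{\ast}$.

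The main subtlety to verify will be compatibility between the pushout computing $\Xi_{!}^{\text{pt}}$ at the level of pointed Segal presheaves and the pushout defining $\Sigma$ at the level of pointed $(\infty,\omega)$-categories, taking into account the univalent-complete localisation. This is essentially automatic from the fact that $\Xi_{!}$ preserves representables together with the reflective inclusion of univalent-complete Segal presheaves into Segal presheaves, but is the point at which one should invoke the reference to \cite[Lemma 3.1.4]{masuda24:_algeb_categ_spect} identifying the enriched and presheaf-theoretic incarnations of $\Sigma$.
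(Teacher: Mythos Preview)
Your proposal is correct and follows essentially the same route as the paper: both identify $\Omega$ and $\Xi^{\ast}$ by showing that their left adjoints $\Sigma$ and $\Xi_{!}$ (on pointed objects) coincide, then conclude by uniqueness of adjoints. The only packaging difference is that the paper verifies the agreement on freely pointed representables $\yo(T)_{+}$ via \cref{exm:suspend-free-ptd} and extends by colimit-preservation, whereas you compute the pointed left adjoint directly through the pushout formula $\Xi_{!}\func{X}\lmamalg{\Xi_{!}\ast}\ast$ --- the same computation, carried out at a generic object rather than on generators.
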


\begin{proof}
  The left-adjoint $\Xi_{!}$ to $\Xi^{\ast}$, by its construction as a
  Yoneda extension, is uniquely characterised by its restriction along
  $\Theta\xrightarrow{\yo}\funcs{\op{\Theta}}{\infgrpds}
  \xrightarrow{(-)_{+}}\funcs{\op{\Theta}}{\infgrpds_{\ast}}$
  and the fact that it preserves colimits. One readily sees, thanks
  to~\cref{exm:suspend-free-ptd}, that the restriction of $\Xi_{!}$ to
  $\Theta$ is the reduced suspension of globular sums, and since the
  functor $\Sigma$ is a left-adjoint it preserves colimits, so it has
  to coincide with $\Xi_{!}$.
\end{proof}

\begin{thm}
  \label{thm:main-thm}
  There is an equivalence of $\infty$-categories
  $\infzcats_{\ast}\simeq\catsp$.
\end{thm}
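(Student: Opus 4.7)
The plan is to identify the tower defining $\catsp$ with the tower defining $\infzcats$ after the passage to pointed objects, using the two preceding lemmas as the essential translation tools.

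First I would rewrite the tower defining $\catsp$ entirely in presheaf terms. By Goldthorpe's equivalence (already used by the authors) together with univalence-completeness, $\infinfcats \simeq \funcs{\op{\Theta}}{\infgrpds}^{\text{Seg,univ-cplt}}$, and by \cref{lemma:ptd-prshvs} the passage to pointed objects commutes with the formation of (univalent-complete) Segal presheaves, yielding $\infinfcats_{\ast}\simeq\funcs{\op{\Theta}}{\infgrpds_{\ast}}^{\text{Seg,univ-cplt}}$. Under this equivalence, \cref{lemma:desusp-is-desusp} identifies the reduced loop functor $\Omega$ with $\Xi^{\ast}$, so
\begin{equation*}
  \catsp\simeq\lim\bigl(\cdots\xrightarrow{\Xi^{\ast}}\infinfcats_{\ast}
  \xrightarrow{\Xi^{\ast}}\infinfcats_{\ast}\bigr)\text{.}
\end{equation*}

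Next I would commute this limit with the pointing. Since $\infinfcats_{\ast}=\ast\downarrow\infinfcats$ and limits in undercategories are computed in the underlying category (again \cite[Proposition 1.2.13.8]{lurie09:_higher}, as used in the proof of \cref{lemma:ptd-prshvs}), and since each $\Xi^{\ast}$ preserves the terminal object (it is a right adjoint), we obtain
\begin{equation*}
  \lim\bigl(\cdots\xrightarrow{\Xi^{\ast}}\infinfcats_{\ast}\bigr)
  \simeq\Bigl(\lim\bigl(\cdots\xrightarrow{\Xi^{\ast}}\infinfcats\bigr)\Bigr)_{\ast}\text{.}
\end{equation*}
By definition, the right-hand side is $\infzcats_{\ast}$: indeed the limit inside the parentheses is, by the very definition given just before \cref{def:shifting-ops-flagged-zcats}'s univalent variant, the $\infty$-category $\infzcats$, sitting as the essential image of the fully faithful embedding into $\infzcats^{\flagged}$ (that this embedding preserves the terminal object is immediate, and pointings on the two sides correspond). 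Composing the two displayed equivalences gives $\catsp\simeq\infzcats_{\ast}$.

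The only step that requires any real care is the interchange of $\lim$ and the pointing: one must check not merely that undercategory limits are computed in the projection, but that this applies levelwise along the $\mathbb{N}$-indexed tower, which reduces to the fact that each transition $\Xi^{\ast}$ preserves the terminal object (automatic since $\Xi^{\ast}$ is a right adjoint in the Segal setting, and agrees with $\Omega$, which is itself a right adjoint on pointed objects). Everything else is bookkeeping of the two equivalences already established.
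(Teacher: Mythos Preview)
Your argument is correct and is essentially the paper's proof run in the opposite order: the paper applies \cref{lemma:ptd-prshvs} once to $\Theta_{\mathbb{Z}}$ and then distributes the resulting presheaf category over the colimit defining $\Theta_{\mathbb{Z}}$, whereas you apply \cref{lemma:ptd-prshvs} levelwise to each $\Theta$ and then commute the sequential limit with $(-)_{\ast}$. One small imprecision: the citation of \cite[Proposition 1.2.13.8]{lurie09:_higher} (``limits \emph{in} an undercategory are computed in the projection'') does not literally justify commuting a limit \emph{of} undercategories with the pointing; what you actually need is that $\cat{C}\mapsto\cat{C}_{\ast}\simeq\cat{C}^{[1]}\times_{\cat{C}}\{\ast\}$ is a limit construction and hence preserves limits of diagrams of $(\infty,1)$-categories whose transition functors preserve the terminal object, which is equally standard and is what the paper's route via \cref{lemma:ptd-prshvs} on $\Theta_{\mathbb{Z}}$ implicitly packages.
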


\begin{proof}
  By~\cref{lemma:ptd-prshvs}, pointed $(\infty,\mathbb{Z})$-categories
  are (univalent-complete Segal) presheaves of pointed
  $\infty$-groupoids on $\Theta_{\mathbb{Z}}$. One sees easily, as
  before in the proof of~\cref{corlr:flaggd-zcats-lim-suspensions},
  that the Segal and univalence-completeness conditions distribute to
  each of the factors in the diagram
  \begin{equation}
    \label{eq:1}
    \begin{tikzcd}
      \cdots\xrightarrow{\Xi^{\ast}}\funcs{\op{\Theta}}{\infgrpds_{\ast}}
      \xrightarrow{\Xi^{\ast}}\funcs{\op{\Theta}}{\infgrpds_{\ast}}
      \xrightarrow{\Xi^{\ast}}\funcs{\op{\Theta}}{\infgrpds_{\ast}}\text{.}
    \end{tikzcd}
  \end{equation}

  Then, by~\cref{lemma:desusp-is-desusp}, this diagram coincides
  with~\cref{eq:catl-spectra-def} defining categorical spectra.
\end{proof}

\subsection{Monoidal structures}
\label{sec:monoidal-structures}

A pointed object is, by~\cite[Remark 2.1.3.10]{lurie17:_higher_algeb},
the same thing as an algebra over the $\infty$-operad
$\oprd{E}_{0}$. Thus the definition of the $\infty$-category
categorical spectra can be interpreted as the limit of
$\cdots\xrightarrow{\Omega}
\algs{\oprd{E}_{0}}(\infinfcats)\xrightarrow{\Omega}
\algs{\oprd{E}_{0}}(\infinfcats)$, which by~\cref{thm:main-thm}
becomes equivalent to $\algs{\oprd{E}_{0}}(\infzcats)$.

We will now show, using a result
of~\cite{stefanich21:_higher_quasic_sheav}, that this generalises to
$\oprd{E}_{k}$-algebras for any value of $k$.

For simplicity of notation, we will implicitly make use of the
identification
$\algs{\oprd{E}_{k}}(\cat{C})\simeq\algs{\oprd{E}_{k}}(\cat{C}_{\ast})$
(\emph{cf.}~\cite[Notation 5.2.6.11]{lurie17:_higher_algeb}) for any
cartesian monoidal $(\infty,1)$-category $\cat{C}$. We will also use
the additivity theorem to identify $\oprd{E}_{1}$-algebras in
$\oprd{E}_{k}$-algebras with $\oprd{E}_{k+1}$-algebras.

\begin{pros}
  For any $0\leq k\leq\infty$, there is an equivalence of
  $(\infty,1)$-categories
  \begin{equation}
    \algs{\oprd{E}_{k}}(\infzcats)
    \simeq\lim\bigl(\cdots\xrightarrow{\Omega}
    \algs{\oprd{E}_{k}}(\infinfcats)\xrightarrow{\Omega}
    \algs{\oprd{E}_{k}}(\infinfcats)\bigr)\text{.}
  \end{equation}
\end{pros}

\begin{proof}
  Recall from~\cite[Remark 3.12]{chu21:_homot_segal} that (the
  $(\infty,1)$-category of operators of) any $(\infty,1)$-operad
  $\oprd{O}$ is canonically endowed with a structure of algebraic
  pattern, such that Segal $\oprd{O}$-objects in a complete
  $(\infty,1)$-category $\cat{C}$ coincide with $\oprd{O}$-algebras in
  the cartesian monoidal $\infty$-category $\cat{C}$. As an upshot,
  and by uncurrying, $\oprd{E}_{k}$-monoidal
  $(\infty,\mathbb{Z})$-categories are Segal objects in $\infgrpds$
  for the product pattern
  $\oprd{E}_{k}\times\op{\Theta}_{\mathbb{Z}}$.

  Since limits commute with filtered (in particular, sequential)
  colimits, we also have from the definition of
  $\op{\Theta}_{\mathbb{Z}}$ that
  $\oprd{E}_{k}\times\op{\Theta}_{\mathbb{Z}}
  \simeq\colim\bigl(\oprd{E}_{k}\times\op{\Theta}
  \xrightarrow{\oprd{E}_{k}\times\op{\Xi}}
  \oprd{E}_{k}\times\op{\Theta}
  \xrightarrow{\oprd{E}_{k}\times\op{\Xi}}\cdots\bigr)$, so that
  passing to Segal $\infty$-groupoids produces the sought-after
  equivalence.
\end{proof}

\begin{remark}
  Specialising to $k=0$, this gives another proof
  of~\cref{thm:main-thm}.
\end{remark}

\begin{pros}[{\cite[Proposition
    13.4.4]{stefanich21:_higher_quasic_sheav}}]
  \label{pros:catsp-mondl-indep}
  For any $0\leq k\leq\infty$, the forgetful functors
  $\algs{\oprd{E}_{k}}(\infinfcats)\to\algs{\oprd{E}_{0}}(\infinfcats)$
  induce an equivalence
  \begin{equation}
    \label{eq:catsp-lim-ek-mondl-catsp}
    \catsp\simeq\lim\bigl(\cdots\xrightarrow{\Omega}
    \algs{\oprd{E}_{k}}(\infinfcats)\xrightarrow{\Omega}
    \algs{\oprd{E}_{k}}(\infinfcats)\bigr)\text{.}
  \end{equation}
\end{pros}

\begin{proof}
  \cite{stefanich21:_higher_quasic_sheav} proves the case $k=\infty$,
  but we show that a mild adaptation gives the result for any $k$.

  Consider the diagram
  \begin{equation}
    \label{eq:6}
    \begin{tikzcd}
      \vdots \arrow[d,equals] & \vdots \arrow[d,equals] & \vdots
      \arrow[d] &
      \arrow[l,phantom,"\cdots"description] \\
      \algs{\oprd{E}_{k}}(\infinfcats) \arrow[d,equals] &
      \algs{\oprd{E}_{k}}(\infinfcats) \arrow[d] \arrow[l,"\Omega"'] &
      \algs{\oprd{E}_{k-1}}(\infinfcats) \arrow[d] \arrow[l,"\Omega"'] &
      \arrow[l,phantom,"\cdots"description] \\
      \algs{\oprd{E}_{k}}(\infinfcats) \arrow[d] &
      \algs{\oprd{E}_{k-1}}(\infinfcats) \arrow[d] \arrow[l,"\Omega"] &
      \algs{\oprd{E}_{k-2}}(\infinfcats) \arrow[d] \arrow[l,"\Omega"] &
      \arrow[l,phantom,"\cdots"description] \\
      \vdots \arrow[d] & \vdots \arrow[d] & \vdots \arrow[d] &
      \arrow[l,phantom,"\cdots"description] \\
      \algs{\oprd{E}_{1}}(\infinfcats) \arrow[d] &
      \algs{\oprd{E}_{0}}(\infinfcats) \arrow[d] \arrow[l,"\Omega"'])&
      \algs{\oprd{E}_{0}}(\infinfcats) \arrow[d] \arrow[l,"\Omega"'] &
      \arrow[l,phantom,"\cdots"description] \\
      \algs{\oprd{E}_{0}}(\infinfcats) &
      \algs{\oprd{E}_{0}}(\infinfcats) \arrow[l,"\Omega"])&
      \algs{\oprd{E}_{0}}(\infinfcats) \arrow[l,"\Omega"] &
      \arrow[l,phantom,"\cdots"description]
    \end{tikzcd}
  \end{equation}

  The limit of each $n$th row is equivalent to $\catsp$ (as the limit
  of the sequence starting after the $n$th term), so that the limit of
  the induced column again computes $\catsp$.

  On the other hand, the limit of each $p$th column is
  $\algs{\oprd{E}_{k}}(\infinfcats)$, either, if $k$ is finite,
  because the sequence starting after the $(k+p)$th term is constant
  with value $\algs{\oprd{E}_{k}}(\infinfcats)$, or if $k$ is infinite
  because as reminded in the proof of~\cite[Proposition
  13.4.4]{stefanich21:_higher_quasic_sheav}
  $\algs{\oprd{E}_{\infty}}(\infinfcats)
  \simeq\lim_{p}\algs{\oprd{E}_{p}}(\infinfcats)$. Thus the limit of
  the induced row is
  $\lim\bigl(\cdots\xrightarrow{\Omega}
  \algs{\oprd{E}_{k}}(\infinfcats)\xrightarrow{\Omega}
  \algs{\oprd{E}_{k}}(\infinfcats)\bigr)$.
\end{proof}

\begin{corlr}
  For any $0\leq k<n\leq\infty$, the forgetful functor
  $\algs{\oprd{E}_{n}}\bigl(\infzcats\bigr)
  \to\algs{\oprd{E}_{k}}\bigl(\infzcats\bigr)$ is an
  equivalence.\qed{}
\end{corlr}

This shows that the (unpointed) stabilisation process giving rise to
$(\infty,\mathbb{Z})$-categories already allows for a very strong form
of delooping: as soon as an $(\infty,\mathbb{Z})$-category is equipped
with a pointing, it can be fully delooped to an
$\oprd{E}_{\infty}$-monoidal $(\infty,\mathbb{Z})$-category.

\subsection{Comparison of cells}
\label{sec:comparison-cells}

\begin{definition}
  Let $\cat{X}$ be a categorical spectrum, corresponding to a sequence
  of pointed $(\infty,\omega)$-categories
  $(\cat{X}_{n})_{n\in\mathbb{N}}$ with equivalences
  $\Omega\cat{X}_{n+1}\simeq\cat{X}_{n}$. For any $k\in\mathbb{N}$,
  the $\infty$-groupoid of \textbf{$k$-cells} in $\cat{X}$ is the
  sequential colimit
  \begin{equation}
    \label{eq:def-cells-catsp}
    \cells_{k}\cat{X}\coloneqq
    \colim\bigl(\hom(\cat{D}_{k},\cat{X}_{0})
    \to\hom(\cat{D}_{k+1},\cat{X}_{1})
    \to\hom(\cat{D}_{k+2},\cat{X}_{2})\to\cdots\bigr)\text{.}
  \end{equation}
\end{definition}

As pointed out in~\cite[Remark
13.2.14]{stefanich21:_higher_quasic_sheav}, we have
$\cells_{k}\Omega\cat{X}\simeq\cells_{k+1}\cat{X}$, so that for $k<0$
we may also define $\cells_{k}\cat{X}=\cells_{0}\Sigma^{-k}\cat{X}$.

\begin{lemma}
  \label{lemma:z-cells-are-cells}
  For any $(\infty,\mathbb{Z})$-category $\cat{C}$ corresponding to
  $(\cat{C}_{k})_{k\in\mathbb{N}}$, and for any $k\in\mathbb{N}$ the
  colimit
  \begin{equation}
    \label{eq:def-cells-zcat}
    \colim\bigl(\hom(\cat{D}_{k},\cat{C}_{0})
    \to\hom(\cat{D}_{k+1},\cat{C}_{1})
    \to\hom(\cat{D}_{k+2},\cat{C}_{2})\to\cdots\bigr)
  \end{equation}
  is equivalent to $\hom(\Xi^{\infty}\cat{D}_{k},\cat{C})$.
\end{lemma}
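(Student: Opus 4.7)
The plan is to show that each term $\hom(\cat{D}_{k+n},\cat{C}_n)$ of the colimit is already canonically equivalent to $\hom(\Xi^\infty\cat{D}_k,\cat{C})$, and that the transition maps correspond to these canonical equivalences; the result then follows since a sequential colimit of equivalences is equivalent to any term.

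First, I would invoke the adjunction $\Xi^{\infty-n}_! \dashv \Xi^{\infty-n,*} = \Eta^{\infty-n}_!$ from \cref{def:shifting-ops-flagged-zcats}, together with the observation made after \cref{corlr:flaggd-zcats-lim-suspensions} that $\Eta^{\infty-n}_!$ is the projection $\cat{C} \mapsto \cat{C}_n$. Iterating the relation $\Xi^{\infty-k}\circ\Xi \simeq \Xi^{\infty-(k-1)}$ of \cref{construct:shifts-theta-stab} exactly $n$ times identifies $\Xi^{\infty-n}\cat{D}_{k+n}$ with $\Xi^\infty\cat{D}_k$ in $\Theta_{\mathbb{Z}}$ (both correspond to the stable globe $\overline{k}$ of $\mathbb{G}_{\mathbb{Z}}$ from \cref{construct:stable-globe-cat}). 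Combining these yields, for every $n$,
\[
  \hom(\cat{D}_{k+n},\cat{C}_n)
  \simeq \hom(\Xi^{\infty-n}_!\cat{D}_{k+n},\cat{C})
  \simeq \hom(\Xi^\infty\cat{D}_k,\cat{C}).
\]

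Next, I would check that the transition maps in \cref{eq:def-cells-zcat} become the identity under these identifications. Unfolding the definition, the transition sends $f\colon\cat{D}_{k+n}\to\cat{C}_n$ to the composite $\cat{D}_{k+n+1} = \Xi\cat{D}_{k+n} \xrightarrow{\Xi f} \Xi\cat{C}_n \to \cat{C}_{n+1}$, where the last arrow is adjoint to the structural equivalence $\cat{C}_n \simeq \Xi^*\cat{C}_{n+1}$. By the $\Xi_! \dashv \Xi^*$ adjunction on $\infinfcats^{\flagged}$, this is precisely the adjunction bijection $\hom(\cat{D}_{k+n},\Xi^*\cat{C}_{n+1}) \simeq \hom(\Xi\cat{D}_{k+n},\cat{C}_{n+1})$, so it is an equivalence; at the level of $\infzcats^{\flagged}$ it corresponds through the identifications above to the identity on $\hom(\Xi^\infty\cat{D}_k,\cat{C})$.

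The only real subtlety is coherence: ensuring that the individual equivalences assemble into an equivalence of diagrams (constant on the right), so that passing to the colimit is legitimate. This is essentially the content, at the level of representables, of the equivalence $\funcs{\op{\Theta}_{\mathbb{Z}}}{\infgrpds} \simeq \lim_n\funcs{\op{\Theta}}{\infgrpds}$ used in the proof of \cref{corlr:flaggd-zcats-lim-suspensions}, and follows formally from the universal property of $\op{\Theta}_{\mathbb{Z}}$ as a filtered colimit of algebraic patterns. Granted this, the colimit on the left-hand side of the claimed equivalence is a sequential colimit of equivalences, hence equivalent to any term, which is $\hom(\Xi^\infty\cat{D}_k,\cat{C})$.
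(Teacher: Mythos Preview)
Your proposal is correct and follows essentially the same approach as the paper: identify each term $\hom(\cat{D}_{k+n},\cat{C}_n)$ with $\hom(\Xi^{\infty}\cat{D}_k,\cat{C})$ via the $\Xi^{\infty-n}_!\dashv\Eta^{\infty-n}_!$ adjunction and the relation $\Xi^{\infty-n}\cat{D}_{k+n}\simeq\Xi^{\infty}\cat{D}_k$, then observe that the resulting sequential diagram is constant over $\mathbb{N}$. You are more explicit than the paper about why the transition maps become identities and about the coherence of the identifications, whereas the paper simply asserts constancy and concludes by noting that the colimit of a constant diagram over $\mathbb{N}$ is the copower with $|\mathbb{N}|\simeq\ast$.
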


\begin{proof}
  For any $i\in\mathbb{N}$, we have
  \begin{equation}
    \label{eq:4}
    \hom(\cat{D}_{k+i},\cat{C_{i}})
    \simeq\hom(\Xi^{i}\cat{D}_{k},\Eta^{\infty-i}\cat{C})
    \simeq\hom(\Xi^{\infty-i+i}\cat{D}_{k},\cat{C})\text{,}
  \end{equation}
  so all terms in the colimit are equivalent to
  $\hom(\Xi^{\infty}\cat{D}_{k},\cat{C})$. Because we are taking a
  colimit in the $(\infty,1)$-category of $\infty$-groupoids, the
  colimit of a constant diagram is the copower with the
  groupoidification of the indexing shape category, which in our case
  is $\mathbb{N}$ so contractible.
\end{proof}

In particular, positive-dimensional cells of an
$(\infty,\mathbb{Z})$-category are simply cells in its infinite
desuspension, whereas for categorical spectra it fails to be true that
positive-dimensional cells are cells in the infinite delooping.

\begin{thm}
  \label{thm:cells}
  Let $\cat{X}$ be a categorical spectrum, and let
  $\ast\to\kappa\cat{X}$ be the corresponding pointed
  $(\infty,\mathbb{Z})$-category. For any $k\in\mathbb{Z}$ there is an
  equivalence
  $\cells_{k}\cat{X}\simeq\hom(\Xi^{\infty}\cat{D}_{k},\kappa\cat{X})$.
\end{thm}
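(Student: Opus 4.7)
The plan is to reduce the statement to \cref{lemma:z-cells-are-cells} after correctly matching the data on both sides, treating the cases $k\geq0$ and $k<0$ separately.

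First I would establish the preliminary that, under the equivalence $\infzcats_{\ast}\simeq\catsp$ of \cref{thm:main-thm}, the $n$th component $\cat{C}_{n}=\Eta^{\infty-n}_{!}\kappa\cat{X}$ of $\kappa\cat{X}$ coincides with the pointed $(\infty,\omega)$-category $\cat{X}_{n}$. This is implicit in the proof of \cref{thm:main-thm}, which matches the tower \cref{eq:catl-spectra-def} defining $\catsp$ with the tower \cref{eq:tower-infinfcats-desuspend} whose limit is $\infzcats$ via the identification $\Omega\simeq\Xi^{\ast}$ of \cref{lemma:desusp-is-desusp}.

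For $k\geq0$, the colimit \cref{eq:def-cells-catsp} defining $\cells_{k}\cat{X}$ then coincides verbatim with the colimit \cref{eq:def-cells-zcat} appearing in \cref{lemma:z-cells-are-cells} applied to $\kappa\cat{X}$, immediately giving $\cells_{k}\cat{X}\simeq\hom(\Xi^{\infty}\cat{D}_{k},\kappa\cat{X})$.

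For $k<0$, I would unfold the definition $\cells_{k}\cat{X}=\cells_{0}\Sigma^{-k}\cat{X}$. Since $\Omega$ corresponds to $\Xi^{\ast}=\Eta_{!}$ by \cref{lemma:desusp-is-desusp} (the functor $\Xi$ being an autoequivalence of $\Theta_{\mathbb{Z}}$ with inverse $\Eta$), its inverse $\Sigma$ corresponds to the shift $\Xi_{!}$, and so $\Sigma^{-k}\cat{X}$ is identified with $\Xi_{!}^{-k}\kappa\cat{X}$. Applying the already-treated case $k=0$ to $\Sigma^{-k}\cat{X}$ then yields
\[
\cells_{k}\cat{X}\;\simeq\;\hom(\Xi^{\infty}\cat{D}_{0},\,\Xi_{!}^{-k}\kappa\cat{X})\;\simeq\;\hom(\Eta_{!}^{-k}\Xi^{\infty}\cat{D}_{0},\,\kappa\cat{X}),
\]
using that $\Xi_{!}$ and $\Eta_{!}$ are mutually inverse autoequivalences of $\infzcats$. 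To finish, iterating the identity $\Eta\circ\Xi^{\infty}\simeq\Xi^{\infty-1}$ from \cref{construct:shifts-theta-stab} gives $\Eta_{!}^{-k}\Xi^{\infty}\cat{D}_{0}\simeq\Xi^{\infty+k}\cat{D}_{0}=\Xi^{\infty}\cat{D}_{k}$, as desired.

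The substance of the proof sits in \cref{lemma:z-cells-are-cells}, which already showed that the sequential colimit defining cells is constant, reflecting the stability intrinsic to $(\infty,\mathbb{Z})$-categories; the only step requiring a little care is the bookkeeping of signs and the correspondence $\Sigma\leftrightarrow\Xi_{!}$ (forced by the uniqueness of adjoints). No deeper obstacle is anticipated.
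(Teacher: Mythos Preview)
Your proposal is correct and follows essentially the same route as the paper: both reduce to \cref{lemma:z-cells-are-cells} after identifying the components $(\kappa\cat{X})_{n}$ with $\cat{X}_{n}$ via the compatibility of the equivalence $\kappa$ with the infinite deloopings. The only difference is cosmetic: the paper invokes the lemma uniformly for all $k\in\mathbb{Z}$ in one line, whereas you split off the case $k<0$ and handle it explicitly via the shift $\Sigma\leftrightarrow\Xi_{!}$, which is a cleaner way to make sense of the colimit when the initial terms $\cat{D}_{k+i}$ would have negative index.
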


\begin{proof}
  By~\cref{lemma:z-cells-are-cells}, we have
  $\hom(\Xi^{\infty}\cat{D}_{k},\kappa\cat{X})
  \simeq\colim_{i\in\mathbb{N}}\bigl(
  \hom(\cat{D}_{k+i},(\kappa\cat{X})_{i})\bigl)$, and as the
  equivalence $\kappa\colon\catsp\xrightarrow{\simeq}\infzcats_{\ast}$
  is compatible with the infinite deloopings this is equivalent to
  $\cells_{k}\cat{X}$.
\end{proof}

\begin{construct}
  The projection $\cat{D}_{1}\to\cat{D}_{0}$ induces a degeneracy map
  $\cells_{k-1}\cat{X}\to\cells_{k}\cat{X}$ for any $k\in\mathbb{Z}$
  and any $\cat{X}\in\catsp$. A $k$-cell in a categorical spectrum
  $\cat{X}$ is said to be \textbf{invertible} if it lies in the image
  of this degeneracy. More concretely, \cite[Proposition
  13.2.19]{stefanich21:_higher_quasic_sheav} shows that a $k$-cell
  $\varphi$ of $\cat{X}$ is invertible if and only if some (and thus,
  any) representative
  $\overline{\varphi}\in\hom(\cat{D}_{k+i},\cat{X}_{i})$ defines an
  invertible $(k+i)$-cell of the $(\infty,\omega)$-category
  $\cat{X}_{i}$.
  
  By~\cite[Remark 13.3.2]{stefanich21:_higher_quasic_sheav},
  categorical spectra all of whose cells are invertible coincide with
  spectra.
\end{construct}

\begin{corlr}[{\cite[Corollary
    14.3.7]{lessard19:_spect_local_finit_z_group}}]
  \label{corlr:lessard-grpd-spt}
  The equivalence $\catsp\simeq\infzcats_{\ast}$ restricts to an
  equivalence between the $\infty$-categories of pointed groupoidal
  $(\infty,\mathbb{Z})$-categories and of spectra.
\end{corlr}

\begin{proof}
  By univalence, a cell of an $(\infty,\mathbb{Z})$-category is
  invertible if and only if it lies in the image of a degeneracy
  map. Thus the notion of invertibility is preserved by the
  equivalence $\catsp\simeq\infzcats_{\ast}$, so that restricting it
  to groupoidal objects produces the sought-after equivalence.
\end{proof}

\printbibliography

~

{\footnotesize \textsc{David Kern, KTH Royal Institute of Technology,
    Department of Mathematics, SE-100 44 Stockholm, Sweden}

  \textit{Email address}:
  \href{mailto:dkern@kth.se}{\texttt{dkern@kth.se}}

  \emph{URL}:
  \href{https://dskern.github.io/}{\texttt{https://dskern.github.io/}}}

\end{document}